\let\chapter\undefined
\def\NZQ{\mathbb}               % the font for N,Z,Q,R,C
\def\NN{{\NZQ N}}
\def\FF{{\NZQ F}}
\def\frk{\mathfrak}               % font for "Fraktur"
\def\Phi{{\frk n}}
\def\Phi{{\frk N}}
\def\kb{{\mathbf k}}
\def\xb{{\mathbf x}}
\def\A{{\mathcal A}}
\def\P{{\mathcal P}}
\def\C{{\mathcal C}}
\def\xb{{\mathbf x}}
\def\opn#1#2{\def#1{\operatorname{#2}}} % to make operators
\opn\chara{char} \opn\length{\ell} \opn\pd{pd} \opn\rk{rk}
\opn\projdim{proj\,dim} \opn\injdim{inj\,dim} \opn\rank{rank}
\opn\depth{depth} \opn\grade{grade} \opn\height{height}
\opn\embdim{emb\,dim} \opn\codim{codim}
\opn\Tr{Tr} \opn\bigrank{big\,rank}
\opn\superheight{superheight}\opn\lcm{lcm}
\opn\trdeg{tr\,deg}%\emph{
\opn\reg{reg} \opn\lreg{lreg} \opn\ini{in} \opn\lpd{lpd}
\opn\size{size} \opn\sdepth{sdepth}
\opn\link{link}\opn\fdepth{fdepth}\opn\lex{lex}
\opn\LM{LM}
\opn\LC{LC}
\opn\NF{NF}
\opn\Merge{Merge}
\opn\sgn{sgn}
\opn\suppPos{suppPos}
\opn\div{div} \opn\Div{Div} \opn\cl{cl} \opn\Pic{Pic}
\opn\Prin{Prin}
\opn\op{op}
\opn\indeg{indeg} \opn\outdeg{outdeg}
\opn\red{red}
\opn\Spec{Spec} \opn\Supp{Supp} \opn\supp{supp} \opn\Sing{Sing}
\opn\Ass{Ass} \opn\Min{Min}\opn\Mon{Mon}
\opn\Ann{Ann} \opn\Rad{Rad} \opn\Soc{Soc}
 \opn\Ker{Ker} \opn\Coker{Coker} \opn\Am{Am}
\opn\Hom{Hom} \opn\Tor{Tor} \opn\Ext{Ext} \opn\End{End}
\opn\Aut{Aut} \opn\id{id}
\opn\nat{nat}
\opn\pff{pf}%   \pf exists already
\opn\Pf{Pf} \opn\GL{GL} \opn\SL{SL} \opn\mod{mod} \opn\ord{ord}
\opn\Gin{Gin} \opn\Hilb{Hilb}\opn\sort{sort}
\opn\span{span}
\opn\Image{Image}
\opn\aff{aff} \opn\con{conv} \opn\relint{relint} \opn\st{st}
\opn\lk{lk} \opn\cn{cn} \opn\core{core} \opn\vol{vol}
\opn\link{link} \opn\star{star}\opn\lex{lex}\opn\set{set}
\opn\dist{dist}
\opn\gr{gr}
\def\pot#1#2{#1[\kern-0.28ex[#2]\kern-0.28ex]}
\opn\dirlim{\underrightarrow{\lim}}
\opn\inivlim{\underleftarrow{\lim}}
\def\MVT{{\rm MVT}}
\let\to=\rightarrow
\def\Implies{\ifmmode\Longrightarrow \else
        \unskip${}\Longrightarrow{}$\ignorespaces\fi}
\def\implies{\ifmmode\Rightarrow \else
        \unskip${}\Rightarrow{}$\ignorespaces\fi}
\def\iff{\ifmmode\Longleftrightarrow \else
        \unskip${}\Longleftrightarrow{}$\ignorespaces\fi}
\newtheorem{Theorem}{Theorem}[section]
\newtheorem{Corollary}[Theorem]{Corollary}
\newtheorem{Proposition}[Theorem]{Proposition}
\newtheorem{Question}[Theorem]{Question}
\theoremstyle{remark}
\newtheorem{Remark}[Theorem]{Remark}
\theoremstyle{definition}
\newtheorem{Example}[Theorem]{Example}
\newtheorem{Definition}[Theorem]{Definition}
\let\kappa=\varkappa
\def\qed{\ifhmode\textqed\fi
      \ifmmode\ifinner\quad\qedsymbol\else\dispqed\fi\fi}
\def\textqed{\unskip\nobreak\penalty50
       \hskip2em\hbox{}\nobreak\hfil\qedsymbol
       \parfillskip=0pt \finalhyphendemerits=0}
\def\dispqed{\rlap{\qquad\qedsymbol}}
\opn\dis{dis}
\def\pnt{{\raise0.5mm\hbox{\large\bf.}}}
\opn\Lex{Lex}
\opn\syz{{\rm syz}}
\opn\spoly{{\rm spoly}}
\opn\LM{{\rm LM}}
\opn\lm{{\rm lm}}
\opn\projdim{{\rm projdim}}
\opn\lcm{{\rm lcm}} \opn\A{\mathcal A}
\opn\prob{{\rm prob}}
\numberwithin{equation}{section}
\tikzstyle{Cwhite}=[scale = .6,circle, fill = white, minimum size=2.5mm]
\tikzstyle{Cgray}=[scale = .4,circle, fill = gray, minimum size=3mm]
\tikzstyle{Cblack2}=[scale = .4,circle, fill = black, minimum size=3mm]
\tikzstyle{Cblack}=[scale = .7,circle, fill = black, minimum size=3mm]
\tikzstyle{C0}=[scale = .9,circle, fill = black!0, inner sep = 0pt, minimum size=3mm]
\tikzstyle{C1}=[scale = .7,circle, fill = black!0, inner sep = 0pt, minimum size=3mm]
\tikzstyle{Cred}=[scale = .4,circle, fill = red, minimum size=3mm]
\tikzstyle{Cblue}=[scale = .4,circle, fill =blue, minimum size=3mm]
\begin{document}

\title{Support posets of some monomial ideals}

\author{Patricia Pascual-Ortigosa} 
\address{Departamento de Matem\'aticas y Computaci\'on,  Universidad de La Rioja, Spain}
\email{papasco@unirioja.es}

\author{Eduardo S\'aenz-de-Cabez\'on}
\address{Departamento de Matem\'aticas y Computaci\'on,  Universidad de La Rioja, Spain}
\email{eduardo.saenz-de-cabezon@unirioja.es}
%\date {}
%\date{} % Activate to display a given date or no date (if empty),
         % otherwise the current date is printed

\maketitle
\begin{abstract}
The support poset of a monomial ideal $I\subseteq\kb[x_1,\dots,x_n]$ encodes the relation between the variables $x_1,\dots,x_n$ and the minimal monomial generators of $I$. It is known that not every poset is realizable as the support poset of some monomial ideal. We describe some posets $P$ for which we can explicitly find at least one monomial ideal $I_P$ such that $P$ is the support poset of $I_P$. Also, for some families of monomial ideals we describe their support posets and study their properties. As an example of application we examine the relation between forests and series-parallel ideals.
%%\keywords{monomial ideals, support posets, series-parallel systems}

\end{abstract}
\section{Introduction}\label{sec:intro}
The support poset of a monomial ideal $I\subseteq\kb[x_1,\dots,x_n]$ encodes the relation between the variables $x_1,\dots,x_n$ and the minimal monomial generators of $I$. It was introduced in \cite{MPSW19} to study the set of all depolarizations of a given squarefree monomial ideal. Since many relevant features of a given monomial ideal are shared by the ideals in the same polarity class, the study of polarization and depolarization has become relevant in the last years cf. \cite{F05,HH10,HM10,IKM15,J07}. In this context, the use of the support poset is a useful tool.
 
As it is shown in \cite{MPSW19} not every poset is realizable as the support poset of a monomial ideal. A natural problem is therefore to find posets that can be realized as support posets of monomial ideals and provide explicit descriptions of those ideals, so that we can describe properties of the ideal based on properties of the support poset and viceversa. We address this issue in Section \ref{sec:givenPoset} of the paper in which we give some families of posets for which we can always find at least one monomial ideal supported by them (collections of lines or diamonds, and forests) and provide a full explicit description of the main features of these ideals such as their Betti numbers and free resolutions, see Propositions \ref{prop:Betti-lines} and \ref{prop:diamondsBetti} and in particular Theorem \ref{th:leafIdeal}.

Another natural question related to support posets is to find a natural way to describe the support poset of some families of monomial ideals. We describe in Section \ref{sec:givenIdeal} the support poset of $k$-out-of-$n$ and series-parallel ideals, which correspond to relevant systems in reliability theory \cite{KZ03,SW10,SW11}. We find a particular relation between forests and series-parallel ideals, see Theorem \ref{th:spIdeal} and Proposition \ref{prop:spIdeal}. It is known that a given poset can be the support poset of several different monomial ideals. We see that this holds even within the classes of forests and series-parallel ideals, i.e. a given forest can be the support poset of several different series-parallel ideals.

We finish the paper with several open questions on support posets.

The paper starts with a section containing the basic definitions and results on support posets. The second part of that section recovers the main notions on Mayer-Vietoris trees \cite{S09}, which will be used in the proofs of Section \ref{sec:givenPoset}.

\section{Preliminaries and basic notions on support posets and Mayer-Vietoris trees}
\subsection{The support poset}
Let $R=\kb[x_1,\dots,x_n]$ be a polynomial ring in $n$ variables. For any monomial $m$ of $R$ the {\em support} of $m$, denoted by $\supp(m)$, is defined as the set of indices of variables which divide $m$. The support of a monomial ideal $I\subseteq R$ is $\supp(I)=\bigcup_{m\in G(I)}\supp(m)$, where $G(I)$ is the unique minimal monomial generating set of $I$. We say that an ideal $I$ has {\em full support} if $\supp(I)=\{1,\dots,n\}=[n]$.  For ease of notation we assume that ideals have full support, unless otherwise stated.

Let $I$ be a squarefree monomial ideal with $G(I)=\{m_1,\dots,m_r\}$. %Let $S_I$ be the support of $I$. 
For each $i$ in $\supp(I)$ we define the set $C_i\subseteq \supp(I)$ as,
\[
C_i=\{  j\,\vert \, j\in \bigcap_{m\in G(I)}\{\supp(m)\vert x_i \mbox{ divides } m\} \}.
\]
In other words, $C_i$ is given by the indices of all the variables that appear in every minimal generator of $I$ in which $x_i$ is present.
Let $C_I=\{C_1,\ldots,C_n\}$. The poset on the elements of $C_I$ ordered by inclusion is called the {\em support poset} of $I$ and is denoted $\suppPos(I)$.

\begin{Definition}\label{def:polarization}
Let $a=(a_1,\dots,a_n)$ and $\mu=(b_1,\dots,b_n)$ be two elements in $\NN^n$ with $b_i\leq a_i$ for all $i$. The polarization of $\mu$ in $\NN^{a_1+\cdots+a_n}$ is the multi-index $$\overline{\mu}=(\underbrace{1,\dots,1}_{b_1},\underbrace{0,\dots,0}_{a_1-b_1},\dots,\underbrace{1,\dots,1}_{b_n},\underbrace{0,\dots,0}_{a_n-b_n}).$$ The {\em polarization of} $\xb^\mu=x_1^{b_1}\cdots x_n^{b_n}\in R$ with respect to $a$ is the squarefree monomial $\xb^{\overline{\mu}}=x_{1,1}\cdots x_{1,b_1}\cdots x_{n,1}\cdots x_{n,b_n}$ in $S=\kb[x_{1,1},\dots,x_{1,a_1},\dots,x_{n,1},\dots,x_{n,a_n}]$. Note that for ease of notation we used $\xb$ with two different meanings in this definition.
Let $I=\langle m_1,\dots,m_r\rangle\subseteq R$ be a monomial ideal and let $a_i$ be the maximum exponent to which indeterminate $x_i$ appears among the generators of $I$.
The {\em polarization of $I$}, denoted by $I^P$, is the monomial ideal in $S$ given by $I^P=\langle \overline{m_1},\dots,\overline{m_r}\rangle$, where $\overline{m_i}$ is the polarization of $m_i$ with respect to $a$. 
\end{Definition}

\begin{Definition}
Let $R, S$ and $T$ be polynomial rings over the field $\kb$. Let $I\subseteq R$ be a squarefree monomial ideal. A {\em depolarization of $I$} is a monomial ideal $J\subseteq S$ such that $I$ is isomorphic to $J^P\subseteq T$ that is:
There is a bijective map $\varphi$ from the set of variables of $R$ to the set of variables of $T$ such that $\varphi(G(I))=G(J^P)$, 
 where $G(J^P)$ is the unique minimal monomial generating set of $J^P$.
%and there is an explicit one-to-one correspondence between the variables of $R$ and $T$ such that $I$ and $\overline{J}$ are isomorphic.
\end{Definition}

\begin{Remark}
Support posets are important to find the depolarizations of a squarefree monomial ideal $I$, i.e. all ideals $J$ such that $J^P$ is isomorphic to $I$, where $J^P$ is the polarization of $J$ cf. \cite{MPSW19}. In particular, all monomial ideals in the same polarity class, i.e. those having the same polarization, have the same support poset. We define the {\em support poset} of a general monomial ideal as the support poset of its polarization. 
\end{Remark}

Given $n$ subsets $C_i$ of $\{1,\dots,n\}$ with $i\in C_i$ for all $i$ and the set $\P$ of all of them, we form the poset $(\P,\prec)$ on the elements of $\P$  ordered by inclusion. For such $(\P,\prec)$ we can construct (in principle several) monomial ideals $I_\P$ such that $(\P,\leq)$ is the support poset of $I_\P$ using the following result

\begin{Proposition}[Proposition 3.1 in \cite{MPSW19}]\label{prop:supportIdeals}
Let $(\C=\{C_1,\dots,C_n\},\subseteq)$ be a poset such that 
$\{i\}\subseteq C_i\subseteq [n]$ for each $i$, and if $k\in C_i$ and $i\in C_j$ then $k\in C_j$ for all $i,j,k$. 
Let $R=\kb[x_1,\dots,x_n]$ and let $m_i=\prod_{j\in C_i}x_j$ for each $i$. 
For any $\sigma\subseteq [n]$ let $m_\sigma=\lcm(m_i \vert i\in\sigma)$, and for any collection $\Sigma$ of subsets of $[n]$, consider the monomial ideal $I_\Sigma=\langle m_\sigma \vert \sigma\in\Sigma\rangle$.
Then $(\C,\subseteq)$ is the support poset of $I_\Sigma$ if the following properties hold:
\begin{enumerate}
\item $\forall i\in[n]$ there is some $\sigma \in\Sigma$ such that $x_i\vert m_\sigma$.
\item If $\{\sigma:\ x_i|m_\sigma\}\subseteq \{\sigma:\ x_j|m_\sigma\}$, then $C_j\subseteq C_i$.
\end{enumerate}
\end{Proposition}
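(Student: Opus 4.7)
The overall goal is to prove that, for every $i \in [n]$, the set $C_i$ coincides with $C_i' := \bigcap_{m \in G(I_\Sigma),\, x_i \mid m} \supp(m)$, the $i$-th element of $\suppPos(I_\Sigma)$; equality of the two posets follows at once from equality of their constituents. A preliminary reformulation of the transitivity assumption is crucial: since each $m_k$ is squarefree with $\supp(m_k) = C_k$, the hypothesis ``$k \in C_i$ and $i \in C_j$ imply $k \in C_j$'' says exactly that $x_i \mid m_j$ forces $m_i \mid m_j$. Extended through $m_\sigma = \lcm(m_k : k \in \sigma)$, it gives the key equivalence $x_i \mid m_\sigma \iff m_i \mid m_\sigma$ for every $\sigma \in \Sigma$. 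Condition~(1) together with this reformulation then lets one exhibit a minimal generator of $I_\Sigma$ divisible by $x_i$, so $C_i'$ is well defined.

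For the containment $C_i \subseteq C_i'$ I would proceed directly. Fix $j \in C_i$ and let $m \in G(I_\Sigma)$ be an arbitrary minimal generator with $x_i \mid m$, writing $m = m_\sigma$ for some $\sigma \in \Sigma$. By the key equivalence some $k \in \sigma$ has $i \in C_k$; applying transitivity to $j \in C_i$ and $i \in C_k$ yields $j \in C_k$, whence $x_j \mid m_k \mid m_\sigma = m$. Since $m$ was arbitrary among minimal generators divisible by $x_i$, this gives $j \in C_i'$.

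The reverse containment $C_i' \subseteq C_i$ is where I would deploy condition~(2). Given $j \in C_i'$, the plan is to verify $\{\sigma \in \Sigma : x_i \mid m_\sigma\} \subseteq \{\sigma \in \Sigma : x_j \mid m_\sigma\}$; once that inclusion is in place, condition~(2) gives $C_j \subseteq C_i$, and $j \in C_j \subseteq C_i$ completes the argument. So fix $\sigma \in \Sigma$ with $x_i \mid m_\sigma$; it suffices to produce a minimal generator $m$ of $I_\Sigma$ with $x_i \mid m$ and $m \mid m_\sigma$, because then $j \in C_i'$ forces $x_j \mid m \mid m_\sigma$. Existence of such an $m$ is the technical crux and the main obstacle of the proof: one combines $m_i \mid m_\sigma$ (from the key equivalence) with condition~(1) to arrange a divisibility chain from $m_\sigma$ down to a minimal generator along which the variable $x_i$ is preserved, ruling out the possibility that every minimal generator inside $m_\sigma$ loses $x_i$. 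Once this existence step is settled, the remaining assembly, passage between the posets $\C$ and $\suppPos(I_\Sigma)$, and the compatibility of the inclusion order, is simply a matter of unwinding the definitions.
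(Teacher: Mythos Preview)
The paper does not supply its own proof of this proposition: it is quoted as Proposition~3.1 of \cite{MPSW19} and used without argument, so there is no in-paper proof to compare your attempt against.

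On the substance of your argument: the step you flag as the ``technical crux'' is not merely an obstacle you have left unpolished---it is a genuine gap that your sketch does not close. You need, given $\sigma\in\Sigma$ with $x_i\mid m_\sigma$, a \emph{minimal} generator $m\in G(I_\Sigma)$ with $x_i\mid m$ and $m\mid m_\sigma$, and your ``divisibility chain'' idea does not produce one, because each passage from some $m_\tau$ to a strictly smaller $m_{\tau'}$ dividing it can drop $x_i$. In fact the statement as printed here admits counterexamples: take $n=3$, $C_1=\{1\}$, $C_2=\{1,2\}$, $C_3=\{3\}$, and $\Sigma=\{\{1\},\{2\},\{2,3\},\{3\}\}$. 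Then $m_{\{1\}}=x_1$, $m_{\{2\}}=x_1x_2$, $m_{\{2,3\}}=x_1x_2x_3$, $m_{\{3\}}=x_3$, conditions~(1) and~(2) are readily checked, yet $I_\Sigma=\langle x_1,x_3\rangle$ has support $\{1,3\}$ and its support poset is $\{\{1\},\{3\}\}\neq\C$.

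The intended hypothesis---consistent with how the result is used in the examples following it---is presumably that $\{m_\sigma:\sigma\in\Sigma\}$ is already the minimal generating set of $I_\Sigma$ (equivalently, no $m_\sigma$ divides another). Under that extra assumption your crux dissolves, since one may simply take $m=m_\sigma$, and the remainder of your argument (the key equivalence $x_i\mid m_\sigma\iff m_i\mid m_\sigma$, the direct verification of $C_i\subseteq C_i'$, and the use of condition~(2) for the reverse inclusion) is correct and complete.
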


The support poset of any monomial ideal $I\subseteq R=\kb[x_1,\dots,x_n]$, together with a given ordering $<$ of the set of the variables induces a partial order $\prec$ in the set of variables in the following way: $x_i\prec x_j$ if $C_i\subset C_j$ or if $C_i=C_j$ and $x_i<x_j$. We call this poset the {\em $<$-support poset} of $I$ and denote it $\suppPos_<(I)$. Observe that if $C_i\neq C_j$ for all $i\neq j$ then any $<$-support poset of $I$ is equal to $\suppPos(I)$ for any ordering $<$ of the variables. The Hasse diagram of $\suppPos_<(I)$ is equivalent to the Hasse diagram of $\suppPos(I)$ where any node $C$ labelled with more than one index is substituted by a vertical line of nodes labelled by the elements of the label of $C$, ordered by $<$.

\begin{Example}[Example 3.2 in \cite{MPSW19}]\label{ex:support-posets}
Let us consider the following sets:

\begin{enumerate}
\item Let %$\C$ be given by 
$C_1=\{1,2\},\,C_2=\{2\},\, C_3=\{3\}, C_4=\{4\}$ and $C_5=\{4,5\}$.

\noindent Let $\Sigma_1=\{\{1\},\{2,4\},\{3\},\{5\}\}$, $\Sigma_2=\{\{1\},\{2,3\},\{3,4\},\{5\}\}$ and $\Sigma_3=\{\{1,3\},\{3,5\},\{1,4\},\{2,5\}\}$. These three collections satisfy the conditions in Proposition \ref{prop:supportIdeals} and hence $(\C=\{C_1,\ldots,C_5\},\subseteq)$ is the support poset of the ideals $I_{\Sigma_1}=\langle x_1x_2,x_2x_4,x_3,x_4x_5\rangle$, $I_{\Sigma_2}=\langle x_1x_2,x_2x_3,x_3x_4,x_4x_5\rangle$ and $I_{\Sigma_3}=\langle x_1x_2x_3,x_3x_4x_5,x_1x_2x_4,x_2x_4x_5\rangle$. 
\item Let $\C$ be given by $C_1=\{1\},\, C_2=\{1,2\}$ and $C_3=\{1,2,3\}$, then there is no monomial ideal $I\subseteq R[x_1,x_2,x_3]$ such that $(\C,\subseteq)$ is the support poset of $I$. To see this, observe that $x_1x_2x_3$ must be one of the minimal generators of $I$, hence the only one, but $\C$ is not the support poset of $I=\langle x_1x_2x_3\rangle$.
\item Let $C_1=\{ 1,2,4\}$, $C_2=\{ 1,2,4\}$, $C_3=\{ 1,2,3,4\}$, $C_4=\{ 4\}$, $C_5=\{ 1,2,4,5,6\}$, $C_6=\{ 4,6\}$, $C_7=\{7\}$, $C_8=\{7,8\}$, $C_9=\{ 7,8,9\}$, $C_{10}=\{ 7,8,10\}$. Then for $\Sigma=\{\{3\},\{6,7\},\{5\},\{9\},\{10\}\}$, the ideal 
\[
I_\Sigma=\langle x_1x_2x_3x_4,x_4x_6x_7,x_1x_2x_4x_5x_6,x_7x_8x_9,x_7x_8x_{10}\rangle\subseteq \kb[x_1,\dots,x_{10}].
\] 
has $(\mathcal{C}=\{C_1,\dots,C_{10}\},\subseteq)$ as its support poset. %, where $\Sigma=\{\{3\},\{6,7\},\{5\},\{9\},\{10\}\}$.
\end{enumerate}
\end{Example}

\begin{Remark}
Observe that $x_i\in C_j$ and $x_k\in C_i$ imply that $x_k \in C_j$ for all $i,j,k$. We can use this fact to visualize support posets using their Hasse diagrams, where each node is labelled by those indices that are in that node and not in any of the nodes below it.

\begin{figure}[h]
\centering
\begin{subfigure}[b]{0.48\textwidth}
\centering
      \begin{tikzpicture}[scale=.8, transform shape, vertices/.style={text width= 1.5em,align=center,draw=black, fill=white, circle, inner sep=1pt}]
             \node [vertices] (0) at (0,0){2};
             \node [vertices] (2) at (1.5,0){3};
             \node [vertices] (3) at (3,0){4};
             \node [vertices] (1) at (0,1.5){1};
             \node [vertices] (4) at (3,1.5){5};
              \foreach \to/\from in {0/1, 3/4}
     \draw [-] (\to)--(\from);
     \end{tikzpicture}
     \caption{Support poset for Example \ref{ex:support-posets} (1)}
     \label{fig:Hasse-support-a}
 \end{subfigure} 
\begin{subfigure}[b]{0.48\textwidth}
\centering
 \begin{tikzpicture}[scale=.8, transform shape, vertices/.style={text width= 1.5em,align=center,draw=black, fill=white, circle, inner sep=1pt}]
             \node [vertices] (0) at (3,0){4};
             \node [vertices] (1) at (7.5,0){7};
             \node [vertices] (2) at (1.5,1.5){1,2};
             \node [vertices] (3) at (4.5,1.5){6};
             \node [vertices] (4) at (7.5,1.5){8};
             \node [vertices] (5) at (0,3){3};
             \node [vertices] (6) at (3,3){5};
             \node [vertices] (7) at (6,3){9};
             \node [vertices] (8) at (9,3){10};
              \foreach \to/\from in {0/2, 0/3, 1/4, 2/5, 2/6, 3/6, 4/7,4/8}
     \draw [-] (\to)--(\from);		
     \end{tikzpicture}
     \caption{Support poset for Example \ref{ex:support-posets} (3)}
 \end{subfigure}
 \end{figure}
 
 \end{Remark}

Figure \ref{fig:hasse} shows the Hasse diagram of $\suppPos_\leq(I)$ in Example  \ref{ex:support-posets} (3) for any order $\leq$ in which $x_1<x_2$.

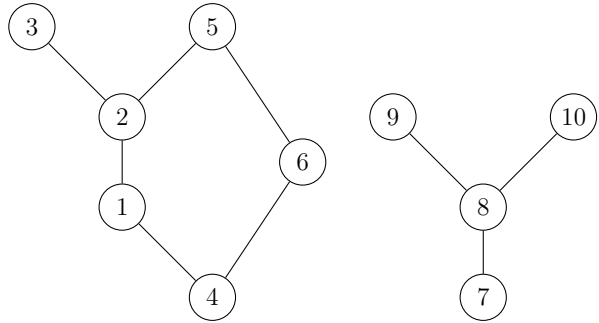
\begin{figure}[h]
\centering
 \begin{tikzpicture}[scale=.8, transform shape, vertices/.style={text width= 1.5em,align=center,draw=black, fill=white, circle, inner sep=1pt}]
             \node [vertices] (0) at (3,0){4};
             \node [vertices] (1) at (7.5,0){7};
             \node [vertices] (2) at (1.5,1.5){1};
             \node [vertices] (2b) at (1.5,3){2};
             \node [vertices] (3) at (4.5,2.25){6};
             \node [vertices] (4) at (7.5,1.5){8};
             \node [vertices] (5) at (0,4.5){3};
             \node [vertices] (6) at (3,4.5){5};
             \node [vertices] (7) at (6,3){9};
             \node [vertices] (8) at (9,3){10};
              \foreach \to/\from in {0/2, 0/3, 1/4, 2/2b, 2b/5, 2b/6, 3/6, 4/7,4/8}
     \draw [-] (\to)--(\from);		
     \end{tikzpicture}
     \caption{$\leq$-Support poset for Example \ref{ex:support-posets} (3) for any order such that $x_1<x_2$}
     \label{fig:hasse}
 \end{figure}

\subsection{Mayer-Vietoris trees}
Mayer-Vietoris trees were introduced in \cite{S09} as a tool to combinatorially obtain the support of mapping cone resolutions \cite{CE95,HT02}. Let $I\subseteq S=\kb[x_1,\dots,x_n]$ be a monomial ideal and $G(I)=\{g_1,\dots,g_r\}$ the unique minimal monomial generating set of $I$. Consider any ordering of the elements in $G(I)$ and let $I_i=\langle g_1\dots,g_i\rangle$ the subideal generated by the first $i$ generators of $I$. For each $i$ we have the following exact sequence
\begin{equation}\label{eq:ses}
0\longrightarrow I_{i-1}\cap \langle g_i\rangle\stackrel{j}{\longrightarrow}I_{i-1}\oplus\langle g_i\rangle\stackrel{ }{\longrightarrow}I_i\longrightarrow 0.
\end{equation}
Assume that free resolutions $\FF'_i$ and $\widetilde{\FF}_i$ are known for $I'_i=I_{i-1}$ and $\widetilde{I_i}=I_{i-1}\cap\langle g_i\rangle$ respectively. Then, a (not necessarily minimal) resolution $\FF_i$ of $I_i$ is obtained as the mapping cone of the chain complex morphism $\varphi: \widetilde{\FF_i}\longrightarrow \FF'_i$ that lifts the inclusion $j$.

Using recursively the sequence (\ref{eq:ses}) on $i$ we can compute a free resolution of $I$ that is called an {\em iterated mapping cone resolution}. The ideals involved in this process can be displayed as a binary tree. The root of this tree is $I$ and every node $J=\langle f_1\dots,f_r\rangle$ has $J'=\langle f_1,\dots,f_{r-1}\rangle$ as right child and $\widetilde{J}=J'\cap\langle f_r\rangle$ as left child. This is called a {\em Mayer-Vietoris tree} of $I$, cf. \cite{S09}.

Each node in a Mayer-Vietoris tree is assigned a position and a dimension. The root has position $1$ and dimension $0$ and the right and left children of a node with position $p$ and dimension $d$ are given positions $2p+1$ and $2p$ respectively, and dimensions $d$ and $d+1$ respectively. We say that a node is {\em relevant} if it is the root of the tree or if its position is even. The monomials of the relevant nodes of dimension $d$ in a Mayer-Vietoris tree are then the multidegrees of the generators of the $d$-th module of the iterated mapping cone resolution $\FF$ of $I$ described by the tree. Let $\MVT(I)_{d,\mu}$ be the set of generators of multidegree $\mu$ in the relevant nodes of dimension $d$ of a Mayer-Vitoris tree of $I$, and let $\MVT(I)'_{d,\mu}$ be the set of elements of $\MVT(I)_{d,\mu}$ that appear only once in relevant nodes of the tree. Since the minimal free resolution of $I$ is a subresolution of $\FF$ we have that for any Mayer-Vietoris tree the following result holds.

\begin{Proposition}\label{prop:MVTbounds}
For any Mayer-Vietoris tree of $I$
\[
\#\MVT(I)'_{d,\mu}\leq\beta_{d,\mu}(I)\leq\#\MVT(I)_{d,\mu}
\]

\end{Proposition}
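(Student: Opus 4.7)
The key input, already described in the paragraph preceding the statement, is the explicit correspondence between the multigraded basis of the $d$-th module $\FF_d$ of the iterated mapping cone resolution and the relevant nodes of dimension $d$ in the Mayer--Vietoris tree. In particular, $\FF$ is a multigraded free resolution of $I$ with $\dim_\kb(\FF_d)_\mu=\#\MVT(I)_{d,\mu}$ for all $d$ and all multidegrees $\mu$. Granting this, both bounds follow from the standard multigraded minimization principle: the minimal free resolution of $I$ is obtained from $\FF$ by iteratively splitting off trivial summands $R\to R$ with identity differential, and each such summand can only pair two basis elements of equal multidegree sitting in consecutive homological degrees.

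The upper bound is then immediate, since the minimal resolution is a direct summand of $\FF$: $\beta_{d,\mu}(I)=\dim_\kb\Tor_d^R(I,\kb)_\mu\leq\dim_\kb(\FF_d)_\mu=\#\MVT(I)_{d,\mu}$. For the lower bound, pick $m=\xb^\mu\in\MVT(I)'_{d,\mu}$; by definition the monomial $m$ labels exactly one relevant node of the tree, and that node has dimension $d$. Therefore $\dim_\kb(\FF_d)_\mu=1$ while $\dim_\kb(\FF_{d-1})_\mu=\dim_\kb(\FF_{d+1})_\mu=0$, so the unique basis element of $\FF$ in multidegree $\mu$ has no potential partner for cancellation during minimization and must survive to the minimal resolution. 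This gives the contribution needed for $\beta_{d,\mu}(I)\geq\#\MVT(I)'_{d,\mu}$.

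The only mildly technical step is the underlying correspondence $\dim_\kb(\FF_d)_\mu=\#\MVT(I)_{d,\mu}$, which is a bookkeeping exercise by induction on the depth of the tree, tracking at each node which side of the mapping cone from \eqref{eq:ses} contributes a homological degree shift (the left child $\widetilde{I_i}=I_{i-1}\cap\langle g_i\rangle$, assigned even position and dimension one greater than the parent) versus which preserves the degree (the right child $I'_i=I_{i-1}$, assigned odd position and the same dimension). Once this is set up cleanly, both inequalities are formal consequences of multigraded minimization and require no further work, so this bookkeeping is the main obstacle to a fully rigorous proof.
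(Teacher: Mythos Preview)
Your proposal is correct and follows essentially the same approach as the paper, which in fact offers no proof beyond the single sentence ``Since the minimal free resolution of $I$ is a subresolution of $\FF$ we have that for any Mayer-Vietoris tree the following result holds'' and defers to \cite{S09}. Your argument is a faithful unpacking of exactly this subresolution principle via multigraded minimization, and both the upper bound (minimal resolution as a summand) and the lower bound (a unique occurrence of $\xb^\mu$ forces $(\FF_{d\pm1}\otimes\kb)_\mu=0$, so no cancellation is possible) are handled correctly.
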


The generators of the relevant nodes of $\MVT(I)$ provide upper and lower bounds for the Betti numbers of the ideal without actually computing the resolution. These bounds can be improved using several criteria and are sharp in several families of ideals, see \cite{S09} for details. A simple useful criterion is the following:
\begin{Proposition}
Let $\mu$ be a multidegree such that there are generators of multidegree $\mu$ in relevant nodes of $\MVT(I)$ of dimensions $d_1\dots d_k$ such that no two of them are consecutive, then 
\[
\beta_{d_i,\mu}(I)=\#\MVT(I)_{d_i,\mu}\,\forall i=1,\dots, k
\]
\end{Proposition}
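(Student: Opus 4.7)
The plan is to exploit the fact that the iterated mapping cone resolution $\FF$ built from the Mayer--Vietoris tree contains the minimal $\ZZ^n$-graded free resolution of $I$ as a direct summand, with the complementary summand consisting of trivial pieces that strip off in consecutive pairs sharing the same multidegree. Under the non-consecutivity hypothesis no such cancellation can touch the dimensions $d_i$ in multidegree $\mu$, so the upper bound of Proposition \ref{prop:MVTbounds} is attained.

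Concretely I would proceed in three steps. First, from the construction of $\FF$ via iterated mapping cones along the short exact sequences \eqref{eq:ses}, I would exhibit a $\ZZ^n$-graded basis of $F_d$ whose elements of multidegree $\mu$ are in natural bijection with the generators of multidegree $\mu$ sitting in the relevant nodes of dimension $d$ of $\MVT(I)$; in particular, the number of such basis elements of $F_d$ in multidegree $\mu$ equals $\#\MVT(I)_{d,\mu}$. Second, I would invoke the standard multigraded minimization principle: the minimal free resolution of $I$ is obtained from $\FF$ by iteratively removing trivial subcomplexes, each consisting of one basis element of $F_d$ and one of $F_{d+1}$ having the same multidegree and joined by a unit entry in the differential. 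Third, by hypothesis the only dimensions at which $\FF$ carries a basis element of multidegree $\mu$ are $d_1,\dots,d_k$, and since no two $d_i$ are consecutive, both $F_{d_i-1}$ and $F_{d_i+1}$ are empty in multidegree $\mu$. Consequently no basis element of $F_{d_i}$ in multidegree $\mu$ can participate in any cancellation, so all $\#\MVT(I)_{d_i,\mu}$ of them descend to the minimal resolution, yielding the claimed equality $\beta_{d_i,\mu}(I)=\#\MVT(I)_{d_i,\mu}$.

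The main obstacle is the first step, namely justifying the bijection between relevant nodes of dimension $d$ and a $\ZZ^n$-graded basis of $F_d$. I would handle this by induction on the size of the Mayer--Vietoris tree, using that the mapping cone of the chain map $\varphi\colon\widetilde{\FF_i}\to\FF'_i$ has degree-$d$ component $(\widetilde F_i)_{d-1}\oplus (F'_i)_d$. This explains why left children (which encode the intersection term $\widetilde J$) inherit a shift of one in the dimension while right children retain the dimension, which is precisely the rule recorded by the dimension labelling of the MVT, and it is the nodes contributing genuinely new basis vectors (as opposed to serving only as multiplicative data in the differential) that are picked out by the ``relevant'' labelling of the root and the even positions. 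Once this bookkeeping is in place, the cancellation argument of step three is automatic.
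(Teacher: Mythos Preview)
The paper does not actually prove this proposition: it is stated as a ``simple useful criterion'' immediately after Proposition~\ref{prop:MVTbounds}, with a reference to \cite{S09} for background, and the next section begins right away. So there is no proof in the paper to compare against.

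Your argument is the standard one and is correct. The essential point is exactly your step three: any $\ZZ^n$-graded free resolution $\FF$ of $I$ decomposes as the minimal resolution plus a direct sum of trivial two-term complexes $0\to S(-\mu)\xrightarrow{\cdot 1} S(-\mu)\to 0$, each sitting in a pair of \emph{consecutive} homological degrees. Since by hypothesis the multidegree $\mu$ occurs in $\FF$ only in the homological degrees $d_1,\dots,d_k$ and no two of these are adjacent, no trivial summand can meet $(\FF_{d_i})_\mu$, so $\beta_{d_i,\mu}(I)=\rank(\FF_{d_i})_\mu=\#\MVT(I)_{d_i,\mu}$.

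Your step one (the bijection between a graded basis of $\FF_d$ and the generators in relevant nodes of dimension $d$) is also correct, though your explanation of why ``relevant'' picks out exactly the basis-contributing nodes is a bit loose. The clean way to see it is to unwind the recursion: at each internal node $J=\langle f_1,\dots,f_r\rangle$ the mapping cone of $\widetilde J\hookrightarrow J'\oplus\langle f_r\rangle$ contributes to $\FF_d$ the module $(\FF^{J'})_d\oplus(\FF^{\langle f_r\rangle})_d\oplus(\FF^{\widetilde J})_{d-1}$; since $\langle f_r\rangle$ has a one-term resolution, the only genuinely new basis element at each split is $f_r$ in homological degree equal to the node's dimension. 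Chasing this through the tree, the basis elements of $\FF_d$ are precisely the minimal generators of the root (dimension $0$) together with the minimal generators of every left child (even position) at dimension $d$, which is exactly the definition of relevant nodes.
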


\section{Ideals with a given support poset}\label{sec:givenPoset}
In this section we give several examples of posets that are realized as support poset of some monomial ideal and we explicitly find those ideals. First, we consider two families of posets,  and for every poset $\P$ in these families we describe an ideal $I_\P$ for which $\P$ is its support poset.  We compute the Betti numbers of these ideals. In the second part of the section we focus on trees and for any tree $\P$ we describe its leaf ideal, a monomial ideal whose support poset is $\P$ .

\subsection{Lines and diamonds}
As an application of support posets the authors give in \cite{MPSW19} the following result describing some ideals having a given poset as a support poset.

 \begin{Proposition}[Proposition 3.13 in \cite{MPSW19} ]\label{prop:npaths}
Let $n, m_1,\dots, m_n$ be some positive integers with $1\leq m_i\leq n$ for all $i$ and let $m=\sum_{i}m_i$. Consider a poset $(\P,\subseteq)$ on subsets of $\{1,\dots,m\}$ formed by $n$ disjoint paths each of length $m_i$. Then there is a squarefree monomial ideal $I$ whose support poset is $\P$ except if $n=2$ and $m_1\neq m_2$. Moreover, if $m_i>1$ for all $i$, then there is a zero-dimensional monomial ideal copolar to $I$.
\end{Proposition}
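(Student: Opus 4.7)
The plan is to split the proposition into three claims and handle each: (A) an explicit squarefree monomial ideal $I_\P$ realizing $\P$ exists whenever the exception does not occur; (B) no such ideal exists in the exception case $n=2$, $m_1\ne m_2$; (C) when every $m_i>1$, an explicit zero-dimensional ideal copolar to $I_\P$ exists. I label the $i$-th path so that $C_{i,j}=\{(i,1),\ldots,(i,j)\}$ for $1\le j\le m_i$, write $x_{i,j}$ for the corresponding variable in $\kb[x_{i,j}]$, and set $d_{i,j}=x_{i,1}\cdots x_{i,j}$ for the chain-$i$ down-set monomials.

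For (A) I build $I_\P$ from products of the $d_{i,j}$ and verify Proposition \ref{prop:supportIdeals} for the corresponding collection $\Sigma$. When $n=2$ and $m_1=m_2=k$, take
\[
I_\P = \Bigl\langle\, d_{1,j_1}\cdot d_{2,j_2} : j_1+j_2 = k,\ 0\le j_1,j_2\le k \,\Bigr\rangle,
\]
a set of $k+1$ generators indexed by the compositions of $k$ into two non-negative parts; these are pairwise incomparable under divisibility and realize the prescribed intersections by direct calculation. When $n\ge 3$, start from the backbone generators $M_i=d_{i,m_i}$ and, for each chain $i$ and each intermediate level $1\le j<m_i$, adjoin a bridge generator $d_{i,j}\cdot d_{i',j'}$ with the foreign chain $i'$ and level $j'$ chosen so that the collection $\{M_i\}\cup\{\text{bridges}\}$ remains an antichain under divisibility. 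Having $n\ge 3$ chains available is precisely what allows bridges at different levels of the same chain to be routed through different foreign chains, preserving minimality while still covering every intermediate level of every chain.

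For (B), assume WLOG $m_1>m_2$ and suppose some ideal $I$ realizes $\P$. The identity $C_{i,m_i}=\bigcap_{x_{i,m_i}\mid g}\supp(g)$ forces $M_i\in G(I)$ for $i=1,2$. For every intermediate level $1\le j<m_1$, the identity $C_{1,j}=\bigcap_{x_{1,j}\mid g}\supp(g)$ forces the existence of a generator $g_j\in G(I)$ of the form $d_{1,j}\cdot v_j$ with $v_j$ a down-set of chain $2$ that is non-empty (else $g_j\mid M_1$ contradicts minimality) and proper (else $M_2\mid g_j$), and the family $(v_j)_{j=1}^{m_1-1}$ must be strictly decreasing under inclusion (otherwise $g_{j_1}\mid g_{j_2}$ for some $j_1<j_2$). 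Since chain $2$ admits only $m_2-1$ proper non-empty down-sets, a strictly decreasing family of length $m_1-1>m_2-1$ cannot exist, yielding the required contradiction.

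For (C), assume $m_i>1$ for every $i$. The chain structure of $I_\P$ realizes it as the polarization of an ideal $J\subseteq\kb[y_1,\ldots,y_n]$ obtained by substituting $d_{i,j_i}\mapsto y_i^{j_i}$ in every generator of $I_\P$; by construction $J^P=I_\P$, so $J$ is copolar to $I_\P$. The backbone generator $M_i$ depolarizes to $y_i^{m_i}$, a proper power of $y_i$ since $m_i\ge 2$, so $J$ contains a pure power of every variable and hence is zero-dimensional; the hypothesis $m_i>1$ ensures that no $y_i$ occurs as a stand-alone generator of $J$, so that the bridges of $I_\P$ genuinely arise from minimal generators of $J$. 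The main technical obstacle throughout is the construction in (A) for $n\ge 3$: exhibiting a single compatible choice of bridges across all chains and all intermediate levels that is simultaneously an antichain under divisibility and induces the correct intersections at every variable.
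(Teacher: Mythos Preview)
This proposition is quoted in the paper from \cite{MPSW19} and is not proved here, so there is no in-paper argument to compare against. I therefore assess your proposal on its own merits.

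Parts (B) and (C) are essentially correct. The pigeonhole argument in (B) is clean: once $M_1,M_2\in G(I)$ is established, each witness $g_j=d_{1,j}v_j$ for $1\le j<m_1$ has $v_j$ a nonempty proper down-set of chain~$2$, and minimality forces the $v_j$ to be strictly decreasing, exceeding the $m_2-1$ available such down-sets when $m_1>m_2$. Part (C) is the standard chain depolarization, and the backbone generators $y_i^{m_i}$ make $J$ zero-dimensional. Your $n=2$, $m_1=m_2=k$ construction in (A) is also correct and coincides with the $n=2$ case of Proposition~\ref{prop:lines}.

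Part (A) for $n\ge3$, however, is not a proof. You only \emph{assert} that for each chain $i$ and each level $1\le j<m_i$ one can choose a bridge $d_{i,j}\cdot d_{i',j'}$ so that the full collection is an antichain and satisfies the hypotheses of Proposition~\ref{prop:supportIdeals}; you never specify $i'$ or $j'$, and you yourself flag this as ``the main technical obstacle''. Naive choices fail: with $n=3$ and $m_1=m_2=m_3=3$, routing each level of each chain to level~$1$ of a distinct foreign chain produces, among the bridges, both $x_{1,1}x_{2,1}$ and $x_{2,1}x_{2,2}x_{1,1}$, the first dividing the second. The hypothesis $m_i\le n$, which you never invoke, is exactly what should make a valid assignment possible, but actually exhibiting one and verifying both minimality and condition~(2) of Proposition~\ref{prop:supportIdeals} at every variable is the content of the result. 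Until that construction is carried out, (A) remains incomplete.
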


In the same spirit, we propose the following result.

\begin{Proposition}\label{prop:lines}
Let $n$ and $m$ be two positive integers and let $(\P,\subseteq)$ be a poset of subsets of the set $[nm]=\{1,\dots,nm\}$ formed by $n$ disjoint lines each of length $m$. Then there is at least one squarefree monomial ideal $I_{n,m}$ such that $\P$ is its support poset and there is a zero-dimensional monomial ideal $J_{n,m}$ copolar to $I_{n,m}$.

In particular, the ideal $J_{n,m}\subseteq\kb[y_1,\dots,y_{n}]$ given by
\[
J_{n,m}=\langle y_1^m,\dots,y_n^m, y_1^{m-1}y_2,\dots,y_1y_2^{m-1}, \dots, y_1^{m-1}y_n,\dots,y_1y_n^{m-1}\rangle.
\]
is a zero dimensional ideal having $\P$ as its support poset.
\end{Proposition}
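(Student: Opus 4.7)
The plan is to verify that the explicit ideal $J_{n,m}$ works by direct computation. Since the support poset of a non-squarefree ideal is defined via its polarization, I first describe $J_{n,m}^P$ explicitly and then identify each set $C_{(i,j)}$ attached to a polarized variable.

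Because $y_i^m\in J_{n,m}$ for every $i$, the radical of $J_{n,m}$ equals $(y_1,\dots,y_n)$, so $J_{n,m}$ is zero-dimensional. The maximum exponent of each $y_i$ among the generators is exactly $m$, so polarization takes place in $S=\kb[x_{i,j}\,:\,1\le i\le n,\ 1\le j\le m]$, and the generators of $J_{n,m}^P$ are
\[
G_i:=x_{i,1}\cdots x_{i,m}\quad(i=1,\dots,n),\qquad H_{i,k}:=x_{1,1}\cdots x_{1,m-k}\,x_{i,1}\cdots x_{i,k}\quad(i\ge 2,\ 1\le k\le m-1).
\]
Setting $I_{n,m}:=J_{n,m}^P$ then gives the squarefree ideal copolar to $J_{n,m}$, and by definition the two share the same support poset.

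The core step is to compute $C_{(i,j)}$ for each polarized variable $x_{i,j}$. For $i=1$, the variable $x_{1,j}$ divides $G_1$ and precisely those $H_{i',k}$ with $m-k\ge j$; the extremal choice $k=m-j$ (valid for $1\le j\le m-1$, for any $i'\ge 2$) witnesses that no $x_{1,\ell}$ with $\ell>j$ is common to all generators containing $x_{1,j}$, while $G_1$ itself already excludes every $x_{i',\cdot}$ with $i'\ne 1$. For $i\ge 2$, the variable $x_{i,j}$ divides $G_i$ and exactly those $H_{i,k}$ with $k\ge j$; the extremal choice $k=j$ excludes every $x_{i,\ell}$ with $\ell>j$, and $G_i$ excludes every $x_{i',\cdot}$ with $i'\ne i$. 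The boundary cases $j=m$ are handled by $G_i$ alone. Hence $C_{(i,j)}=\{(i,1),\dots,(i,j)\}$ in every case; the chains $C_{(i,1)}\subsetneq C_{(i,2)}\subsetneq\cdots\subsetneq C_{(i,m)}$ are pairwise disjoint across $i$, so the support poset of $J_{n,m}$ is exactly $\P$.

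The only delicate point is the asymmetric role of $y_1$, which forces the first chain to be pinned down through the mixed generators $H_{i',k}$ rather than through a single type-1 generator. One has to confirm that the extremal indices $k=m-j$ (for $i=1$) and $k=j$ (for $i\ge 2$) always lie in the admissible range $\{1,\dots,m-1\}$ when $j\le m-1$, and that the boundary case $j=m$ is absorbed by $G_i$. Beyond this small bookkeeping the argument is a mechanical verification.
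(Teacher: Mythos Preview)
Your proof is correct and follows essentially the same strategy as the paper: write down the polarization $I_{n,m}=J_{n,m}^P$ explicitly and verify by direct inspection that each $C_{(i,j)}$ equals $\{(i,1),\dots,(i,j)\}$. The paper organizes the verification inductively on $n$ (base case $n=2$, then add one new block of generators at a time), whereas you compute all the $C_{(i,j)}$ in one shot, distinguishing the asymmetric role of the first chain; the underlying computation is the same.
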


\begin{proof}
We describe the construction of $I$ stepwise as $n$ increases. The base case is $n=2$. Let $\P=A_1\sqcup A_2$ where $A_1=\{1,\dots,m\}$ and $A_2=\{m+1,\dots,2m\}$. The ideal $I_{2,m}\subset \kb[x_1,\dots,x_{2m}]$ generated by the monomials
\[
x_1\cdots x_m, x_{m+1}\cdots x_{2m}, x_1\cdots x_{m-1}x_{m+1}, x_1\cdots x_{m-2}x_{m+1}x_{m+2},\dots, x_1x_{m+1}\cdots x_{2m-1}
\]
satisfies that $\suppPos(I_{2,m})=\P$.

To see this, let us consider first the indices in $A_1$. Observe that $x_m$ appears only in the first generator, $x_1\dots x_m$, hence $C_m=\{1,\dots,m\}$. If $1\leq j<m$ then every generator that contains $x_j$ also contains $x_1\dots,x_{j-1}$ and if $j<k\leq m$ then there is at least one generator which contains $x_j$ but not $x_k$, for instance $x_1\cdots x_jx_{m+1}\cdots x_{2m-j}$. Finally, if $k\geq m+1$ we have that $x_k$ is not present in $x_1\cdots x_m$ in which $x_j$ is, hence $C_j=\{1,\dots,j\}$. By simmetry, the same applies to the generators in $A_2$.

Considering in $\P$ the chain partition given by the $A_i$'s we have that the corresponding depolarization is $J_{2,m}\subset\kb[y_1,y_2]$ given by
\[
J_{2,m}=\langle y_1^m,y_2^m, y_1^{m-1}y_2,\dots,y_1y_2^{m-1}\rangle
\]
which is zero-dimensional and $J^P_{2,m}=I_{2,m}$.
Let now $n=3$. Then $I_{3,m}\subseteq\kb[x_1,\dots x_{3m}]$ is given by the same set of generators of $I_{2,m}$ plus the following ones
\[
\{ x_{2m+1}\cdots x_{3m}, x_1\cdots x_{m-1}x_{2m+1}, x_1\cdots x_{m-2}x_{2m+1}x_{2m+2},\dots, x_1x_{m+1}\cdots x_{3m-1}\}.
\]

Using the same argument as for $n=2$ we have that $\suppPos(I_{3,m})=A_1\sqcup A_2\sqcup A_3$.
The ideal $J_{3,m}\subseteq[y_1,y_2,y_3]$ is given by
\[
J_3=\langle y_1^m,y_2^m,y_3^m, y_1^{m-1}y_2,\dots,y_1y_2^{m-1}, y_1^{m-1}y_3,\dots,y_1y_3^{m-1}\rangle.
\]
Now, proceeding in the same way adding at each step the new generators 
\[
x_{(n-1)m+1}\cdots x_{nm}, x_1\cdots x_{m-1}x_{(n-1)m+1}, \dots, x_1x_{(n-1)m+1}\cdots x_{nm-1}
\]
we obtain the ideal $I_{n,m}$ whose support poset is formed by a disjoint set of $n$ paths of size $m$.
 
The ideal $J_{n,m}\subseteq\kb[y_1,\dots,y_{n}]$ is given by
\[
J_{n,m}=\langle y_1^m,\dots,y_n^m, y_1^{m-1}y_2,\dots,y_1y_2^{m-1}, \dots, y_1^{m-1}y_n,\dots,y_1y_n^{m-1}\rangle.
\]
Observe that $J^P_{n,m}=I_{n,m}$ and $J_{n,m}$ is zero-dimensional for all $n$, since it contains a pure power of each of the variables.
\end{proof}

Observe that the ideal constructed in Proposition \ref{prop:lines} can be obtained by taking the following collection $\sigma$ in $(\P,\subseteq)$
\[
\Sigma=\left( \bigcup_{i=1}^{n}\{im\}\right)\bigcup\left( \bigcup_{i=1}^{n-1}\bigcup_{j=1}^{m-1}\{m-j,im+j\}\right),
\]
which satisfies Proposition \ref{prop:supportIdeals}. The ideals $J_{n,m}$ are generated in degree $m$ and their Betti numbers are computed by the following result.

\begin{Proposition}\label{prop:Betti-lines}
The Betti numbers of the ideal 
\[
J_{n,m}=\langle y_1^m,\dots,y_n^m,y_1^{m-1}y_2,\dots,y_1y_2^{m-1},\dots,y_1^{m-1}y_n,\dots,y_1y_n^{m-1}\rangle
\]
are given by

\[
\beta_0(J_{n,m})=n+(n-1)(m-1)
\]
\[
\beta_i(J_{n,m})={{n-1}\choose {i}}+\sum_{j=2}^{n}(m-1){{1+n-j}\choose i}+{{n-1}\choose{i+1}}\;\forall\, 1\leq i\leq{n-1}
\]
In particular, $\projdim(J_{n,m})=n-1$ and $\reg(J_{n,m})=(n-1)(m-1)$. The minimal free resolution of $J_{n,m}$ can be obtained as an iterated mapping cone.
\end{Proposition}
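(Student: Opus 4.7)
Proceed by induction on $n$; the base case $n=1$ is immediate since $J_{1,m}=\langle y_1^m\rangle$. For the inductive step, decompose
\[
J_{n,m} = J_{n-1,m} + L_n, \qquad L_n = \langle y_n^m, y_1 y_n^{m-1}, \dots, y_1^{m-1}y_n\rangle = y_n(y_1,y_n)^{m-1},
\]
where $J_{n-1,m}$ is viewed as an ideal in $\kb[y_1,\dots,y_n]$ and $L_n$ collects the generators involving $y_n$. The short exact sequence \eqref{eq:ses} associated with this decomposition yields a mapping cone resolution of $J_{n,m}$ built from resolutions of $J_{n-1,m}\oplus L_n$ and of $J_{n-1,m}\cap L_n$. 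The ideal $L_n$ is, up to a multidegree shift by $y_n$, the two-variable power $(y_1,y_n)^{m-1}$, so its minimal resolution has length one with Betti numbers $\beta_0(L_n)=m$ and $\beta_1(L_n)=m-1$; the Betti numbers of $J_{n-1,m}$ are provided by the inductive hypothesis.

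The heart of the argument is the computation of the intersection. Because no generator of $J_{n-1,m}$ involves $y_n$ while every generator of $L_n$ does,
\[
J_{n-1,m}\cap L_n = y_n\cdot(J_{n-1,m}\cap K_n),\qquad K_n:=(y_1,y_n)^{m-1}.
\]
A direct enumeration of $\lcm$'s shows that the minimal generators of $J_{n-1,m}\cap K_n$ are in bijection with those of $J_{n-1,m}$ via $g\mapsto g\cdot y_n^{e(g)}$, where $e(g):=\max(0,\,m-1-\deg_{y_1}(g))$ is the smallest $y_n$-shift placing $g$ into $K_n$. The associated shift $f(u):=\max\{e(g):g\in G(J_{n-1,m}),\ g\mid u\}$ depends only on $u$ (explicitly, $f(u)=\min(m-1,\,\max_{i\ge 2}\deg_{y_i}(u)-1)$ when some $y_i$ with $i\ge 2$ divides $u$, and $f(u)=0$ otherwise), and it is monotone: $u\mid u'$ forces $f(u)\le f(u')$. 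Consequently the map $u\mapsto u\cdot y_n^{f(u)}$ is a poset isomorphism between the lcm-lattices of $J_{n-1,m}$ and $J_{n-1,m}\cap K_n$, and by the Gasharov--Peeva--Welker theorem these ideals have the same total Betti numbers.

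Granting this, the mapping cone formula
\[
\beta_i(J_{n,m}) = \beta_i(J_{n-1,m}) + \beta_i(L_n) + \beta_{i-1}(J_{n-1,m}\cap L_n)
\]
combined with the inductive hypothesis and Pascal's identity $\binom{n-1}{i}+\binom{n-1}{i+1}=\binom{n}{i+1}$ yields the stated Betti numbers; it is convenient to verify the equivalent closed form $\beta_i(J_{n,m})=m\binom{n}{i+1}$ for $i\ge 1$. Minimality of the iterated mapping cone---equivalently, the absence of cancellation in the Mayer-Vietoris tree obtained by peeling off the generators of $L_n$ first---follows because every minimal generator of $J_{n-1,m}\cap L_n$ has total degree strictly greater than $m$, so it cannot coincide with any generator of $J_{n-1,m}$ or of $L_n$ and the comparison map has no unit entries. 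The values $\projdim(J_{n,m})=n-1$ and $\beta_n(J_{n,m})=0$ follow at once from the recursion, and $\reg(J_{n,m})=(n-1)(m-1)$ is obtained by tracking multidegrees through each mapping cone. The main obstacles are the explicit enumeration of minimal generators of $J_{n-1,m}\cap K_n$ (and the verification of the lcm-lattice isomorphism) together with the multidegree bookkeeping required for the regularity; once these are in hand the remaining computation is a routine application of Pascal's identity.
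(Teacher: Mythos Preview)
Your approach is correct but genuinely different from the paper's. The paper builds a single explicit Mayer--Vietoris tree: it orders the generators as $y_1^m,\dots,y_n^m$, then $y_1y_2^{m-1},\dots,y_1y_n^{m-1}$, and so on through $y_1^{m-1}y_2,\dots,y_1^{m-1}y_n$, peels them off one by one, observes that every left-child ideal arising in the process is minimally resolved by its Taylor complex, and checks directly that no multidegree repeats among the relevant nodes. You instead induct on $n$, splitting off the whole block $L_n=y_n(y_1,y_n)^{m-1}$ at once and identifying $J_{n-1,m}\cap L_n$ with a $y_n$-twist of $J_{n-1,m}$ via an lcm-lattice isomorphism and Gasharov--Peeva--Welker; this gives the recursion $\beta_i(J_{n,m})=\beta_i(J_{n-1,m})+\beta_i(L_n)+\beta_{i-1}(J_{n-1,m})$ and hence the clean closed form $\beta_i=m\binom{n}{i+1}$ for $i\ge 1$ (equivalent to the paper's expression via the hockey-stick identity $\sum_{j=2}^n\binom{1+n-j}{i}=\binom{n}{i+1}$). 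Your route is more structural and produces a simpler formula; the paper's is self-contained within its Mayer--Vietoris framework and avoids invoking an external lcm-lattice theorem.

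Two places in your sketch want one more sentence. First, for the map $u\mapsto uy_n^{f(u)}$ to be \emph{surjective} onto the lcm-lattice of $J_{n-1,m}\cap K_n$ you need $\max_{g\in S}e(g)=f(\lcm S)$ for every $S\subseteq G(J_{n-1,m})$, not just for the maximal $S$ below a given $u$; this holds because if $y_1^ay_j^{m-a}\mid\lcm S$ with $j\ge 2$ then some $y_1^by_j^{m-b}\in S$ must achieve $\deg_{y_j}(\lcm S)\ge m-a$, forcing $b\le a$ and hence $e(y_1^by_j^{m-b})\ge e(y_1^ay_j^{m-a})$---an argument that genuinely uses the two-variable shape of the generators. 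Second, your minimality justification (``total degree strictly greater than $m$'') only rules out cancellation at homological degree~$0$; for higher degrees the clean argument is that every syzygy multidegree of $J_{n-1,m}\cap L_n$ carries $y_n$ (so is disjoint from those of $J_{n-1,m}$) and, beyond homological degree~$0$, also carries some $y_j$ with $2\le j\le n-1$ (so is disjoint from those of $L_n$, which live in $\kb[y_1,y_n]$). Both follow immediately from your lcm-lattice description.
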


\begin{proof}
We shall use Mayer-Vietoris trees. First we sort the generators of $J_{n,m}$ in the following way:
\[
y_1^m,\dots,y_n^m,
\]
\[
y_1y_2^{m-1},\dots,y_1y_n^{m-1},
\]
\[
...
\]
\[
y_1^{m-1}y_2,\dots,y_1^{m-1}y_n.
\]
Now, we use them in turn to construct a Mayer-Vietoris tree of $J_{n,m}$ i.e. an iterated mapping cone resolution. Let us denote this resolution by $\FF$ and let $\gamma_i(J_{n,m})$ denote the rank of the $i$'th module of $\FF$. We proceed row by row with the pivots.

The first pivot, $y_1^m$ produces the ideal $\langle y_1^m\rangle \cap \langle y_2^m,\dots,y_1^{m-1}y_n\rangle$, minimally generated by $\langle y_1^m y_2,\dots,y_1^m y_n\rangle$. Observe that the Taylor complex of this ideal is its minimal free resolution, and hence the contribution of this ideal to $\gamma_i(J_{n,m})$ is ${{n-1}\choose {i}}$ for $1\leq i\leq n-1$. Each of the next $n-1$ pivots in the first row, namely $y_2^m,\dots y_n^m$ produce the ideals $\langle y_1y_j^m,y_j^my_{j+1}^m, \dots, y_j^my_n^m \rangle$, $2\leq j\leq n$. Each of these ideals is again minimally resolved by its Taylor complex and hence their contribution to $\gamma_i(J_{n,m})$ is ${{1+n-j}\choose {i}}$ for $1\leq i\leq n-1$ and $2\leq j\leq n$.

For the next $m-2$ rows of pivots, from $y_1y_2^{m-1},\dots, y_1y_n^{m-1}$ to $y_1^{m-2}y_2^2,\dots, y_1^{m-2}y_n^2$ we have that each pivot $y_1^{m-k}y_j^k$ with $j=2,\dots,n$ and $k=2,\dots,m-1$ produces the ideal
$$\langle y_1^{m-k+1}y_j^k,y_1^{m-k}y_j^k y_{j+1}^k,\dots,y_1^{m-k}y_j^k y_n^k\rangle.$$
All these ideals are again minimally resolved by their Taylor complexes and hence their contribution to $\gamma_i(J_{n,m})$ is ${{1+n-j}\choose {i}}$ for $1\leq i\leq n-1$, and this is for $2\leq j \leq n$ and $2\leq k\leq m-1$ hence the contribution of these rows to $\gamma_i(J_{n,m})$ is $\sum_{j=2}^n (m-2){{1+n-j}\choose {i}}$ for $1\leq i\leq n-1$.

Finally, the last row forms a monomial ideal whose Taylor complex is its minimal resolution and is generated by $n-1$ monomials , hence its contribution to $\gamma_i(J_{n,m})$ is ${{n-1}\choose {i+1}}$ for $1\leq i\leq n-1$.

Putting all these contributions together we have that
\[
\gamma_i(J_{n,m})= {{n-1}\choose {i}}+\sum_{j=2}^{n}(m-1){{1+n-j}\choose i}+{{n-1}\choose{i+1}}\;\forall\, 1\leq i\leq{n-1}.
\]
Now it is easy to observe, by the sorting of our pivots, that the Mayer-Vitoris tree that we have built has no repeated generators, i.e. the generators of the modules in $\FF$ all have different multidegrees, hence $\FF$ is minimal. And we obtain that $\projdim(J_{n,m})=n-1$.

Finally, observe that the ideal produced by pivot $y_2^m$ is $J=\langle y_1y_2^m,y_2^my_3^m,\dots y_2^my_n^m\rangle$ $\reg(J)=(n-1)(m-1)+1$. By easy inspection of the degrees of the rest of ideals involved, we can see that $\reg(J_{n,m})=\reg(J)-1$ .
\end{proof}

\begin{Remark}
By keeping track of the (multi-)degrees of the generators of the ideals in the Mayer-Vietoris tree built in Proposition \ref{prop:Betti-lines} we obtain the (multi-)graded Betti numbers of $J_{n,m}$.
\end{Remark}

\begin{Proposition}\label{prop:diamonds}
Let $m$ be a positive integer, let $(\P,\subseteq)$ be a poset of subsets of the set $[4m]$ formed by $m>1$ disjoint diamonds $D_1,\dots,D_m$, $D_i=\{a_{i1},\dots,a_{i4}\}$ with $a_{i1}<a_{i2},\, a_{i1}<a_{i3},\, a_{i2}<a_{i4},\,a_{i3}<a_{i4}$. Then there is at least one squarefree monomial ideal $I_m$ such that $\P$ is its support poset.\end{Proposition}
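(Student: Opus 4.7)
The plan is to construct $I_m$ explicitly. For each diamond $k\in[m]$, take the monomial
\[
g_k=x_{a_{k1}}x_{a_{k2}}x_{a_{k3}}x_{a_{k4}},
\]
which will be the unique generator of $I_m$ containing $x_{a_{k4}}$ and hence automatically realizes $C_{a_{k4}}=\{a_{k1},a_{k2},a_{k3},a_{k4}\}$. To pin down the two middle sets $C_{a_{k2}}=\{a_{k1},a_{k2}\}$, $C_{a_{k3}}=\{a_{k1},a_{k3}\}$ and the minimum $C_{a_{k1}}=\{a_{k1}\}$, I borrow the bottom variable of each other diamond and, for every ordered pair $(k,l)$ with $k\neq l$ in $[m]$, add the auxiliary generators
\[
h^{(k,l)}_2=x_{a_{k1}}x_{a_{k2}}x_{a_{l1}},\qquad h^{(k,l)}_3=x_{a_{k1}}x_{a_{k3}}x_{a_{l1}}.
\]
The hypothesis $m>1$ is used precisely here: without a second diamond to borrow from, the auxiliary generators do not exist and $g_k$ by itself forces $C_i=\{a_{k1},\dots,a_{k4}\}$ for every $i$ in its diamond. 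Equivalently, with $m_i=\prod_{j\in C_i}x_j$ as in Proposition \ref{prop:supportIdeals}, these generators are exactly the $m_\sigma$ for
\[
\Sigma=\bigl\{\{a_{k4}\}:k\in[m]\bigr\}\cup\bigl\{\{a_{k2},a_{l1}\},\{a_{k3},a_{l1}\}:k\neq l\text{ in }[m]\bigr\}.
\]

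Next I would verify minimality of this generating set — a routine divisibility case analysis in which $g_k$ is distinguished by $x_{a_{k4}}$ and each $h^{(k,l)}_j$ is distinguished from the other auxiliaries by its unique combination of $x_{a_{kj}}$ and $x_{a_{l1}}$ — and then compute each $C_i$ directly. For $i=a_{k4}$ only $g_k$ applies, giving $C_{a_{k4}}=\supp(g_k)$. For $i=a_{k2}$ the generators containing $x_{a_{k2}}$ are $g_k$ and the $h^{(k,l)}_2$'s, and $\supp(g_k)\cap\supp(h^{(k,l)}_2)=\{a_{k1},a_{k2}\}$ yields the required set; the case $a_{k3}$ is symmetric. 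For $i=a_{k1}$, intersecting $\supp(g_k)$ with any one $\supp(h^{(k,l)}_2)$ and any one $\supp(h^{(k,l)}_3)$ already leaves $\{a_{k1}\}$, so additional generators containing $x_{a_{k1}}$ can only reinforce this.

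The main (and only mild) obstacle is to ensure global consistency across diamonds: each $h^{(k,l)}_j$ introduces $x_{a_{l1}}$ into a new generator, so one must check that these extra occurrences do not perturb $C_{a_{l1}}$ of the borrowed-from diamond. This is immediate because $a_{l1}$ is the unique minimum of its own diamond, and $C_{a_{l1}}=\{a_{l1}\}$ is already forced by $g_l$ together with a single pair $h^{(l,l')}_2,h^{(l,l')}_3$ with $l'\neq l$. Once this is in hand, Proposition \ref{prop:supportIdeals} (or the direct computation sketched above) yields $\suppPos(I_m)=\P$, completing the proof.
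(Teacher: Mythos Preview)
Your construction is correct: the ideal generated by the $g_k$ together with all $h^{(k,l)}_2$ and $h^{(k,l)}_3$ does have $\P$ as its support poset, and your verification of the four types of $C_i$ is sound. The minimality check is routine as you say.

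The paper, however, takes a different and leaner route. Instead of linking every ordered pair of diamonds, it links them \emph{cyclically}: besides the $m$ diamond monomials $x_{i1}x_{i2}x_{i3}x_{i4}$, it adds only the $m$ monomials $x_{i1}x_{i2}x_{(i+1)1}x_{(i+1)3}$ (indices mod $m$). Each such monomial simultaneously separates $a_{i2}$ from $a_{i3},a_{i4}$ in diamond $i$ and separates $a_{(i+1)3}$ from $a_{(i+1)2},a_{(i+1)4}$ in diamond $i+1$; the two cyclic neighbours of a diamond together then force $C_{a_{i1}}=\{a_{i1}\}$. So the paper's ideal has $2m$ generators, whereas yours has $m(2m-1)$. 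The payoff of the paper's choice is that this specific $2m$-generator ideal (after a natural depolarization) is the one whose Betti numbers are computed in the next proposition; your ideal would not feed into that computation. On the other hand, your all-pairs construction is perhaps more transparently symmetric and requires no cyclic bookkeeping. Both approaches depend on $m>1$ for the same reason you identify: one needs at least one other diamond to borrow a bottom variable from.
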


\begin{proof}
Consider the following two sets of monomials:
\begin{itemize}
\item[] $A=\{ x_{11}x_{12}x_{13}x_{14},\dots,x_{m1}x_{m2}x_{m3}x_{m4}\}$
\item[]  $B=\{x_{11}x_{12}x_{21}x_{23},\dots,x_{(m-1)1}x_{(m-1)2}x_{m1}x_{m3},x_{m1}x_{m2}x_{11}x_{13}\}$
\end{itemize}
Let $\bar{I}_m=\langle A\cup B\rangle\subseteq\kb[x_{11},\dots,x_{14},\dots,x_{m1},\dots, x_{m4}]$, then $\P$ is in fact the support poset of $\bar{I}_m$. Just observe that for every $i$ we have that $x_{i4}$ is only present in the monomial $x_{i1}x_{i2}x_{i3}x_{i4}$ hence $C_{i4}=\{i1,i2,i3,i4\}$, $x_{i3}$ is present in the monomials $x_{i1}x_{i2}x_{i3}x_{i4}$ and $x_{i-1,1}x_{i-1,2}x_{i1}x_{i3}$ hence $C_{i3}=\{i1,i3\}$. The variable $x_{i2}$ is present in the monomials $x_{i1}x_{i2}x_{i3}x_{i4}$ and $x_{i1}x_{i2}x_{i+1,1}x_{i+1,3}$ hence $C_{i2}=\{i1,i2\}$. Finally $x_{i1}$ is present in the monomials $x_{i1}x_{i2}x_{i3}x_{i4}$,  $x_{i1}x_{i2}x_{(i+1)1}x_{(i+1)3}$ and $x_{(i-1)1}x_{(i-1)2}x_{i1}x_{i3}$ hence $C_{i1}=\{i1\}$ \footnote{If $i=1$ then take $m$ instead of $i-1$, and if $i=m$ take $1$ instead of $i+1$.}.
\end{proof}

One possible partition of $(\P,\subseteq)$ is to consider, for each $i$ the paths $\{a_{i1},a_{i2},a_{i4}\}$ and $\{a_{i3}\}$, the resulting deporalization is an ideal for which we can explicitly compute the Betti numbers, hence obtaining the Betti numbers of all the ideals in its polarity class.

\begin{Proposition}\label{prop:diamondsBetti}
Let $I_m\subseteq\kb[x_{11},x_{12},\dots,x_{m1},x_{m2}]$ the ideal given by
\[
I_m=\langle x_{11}^3x_{12},\dots,x_{m1}^3x_{m2},x_{11}^2x_{2,1}x_{2,2},\dots,x_{(m-1)1}^2x_{m1}x_{m2},x_{m1}^2x_{11}x_{12}\rangle.
\]
The Betti numbers of $I_m$ are given by 
\[
\beta_0(I_m)=2m
\]
\[
\beta_i(I_m)=2K^m_{m-3,i-1}+K^m_{m-2,i}
\] 
where the numbers $K^m_{a,b}$ are given by the recurrence relation
\[
K^m_{a,b}=K^m_{a-2,b-1}+K^m_{a-1,b}
\]
with base cases
\[K^m_{a,0}=m+a,\, K^m_{0,i}={m\choose{i+1}},\, K^m_{1,i}={m\choose i}+{m\choose{i+1}}.
\]
\end{Proposition}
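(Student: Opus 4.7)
The plan is to mirror the strategy used in the proof of Proposition \ref{prop:Betti-lines}: construct a Mayer-Vietoris tree of $I_m$ under a carefully chosen ordering of its $2m$ generators, read off the generators in the relevant nodes dimension by dimension to obtain the ranks $\gamma_i$ of an iterated mapping cone resolution, and finally argue that no multidegree is repeated among these relevant nodes so that the resolution is minimal, giving $\beta_i(I_m)=\gamma_i$.

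First I would separate the generators into two blocks: the ``vertex'' generators $f_i=x_{i1}^3 x_{i2}$ for $i=1,\dots,m$ and the ``edge'' generators $g_i=x_{i1}^2 x_{(i+1)1}x_{(i+1)2}$ with indices read cyclically (so $g_m=x_{m1}^2x_{11}x_{12}$). The key local observation is that each edge $g_i$ shares a large $\gcd$ with both adjacent vertices $f_i,f_{i+1}$ and with the two neighboring edges $g_{i-1},g_{i+1}$, while it is essentially coprime to everything else. This cyclic-path interaction pattern is what the two-term recurrence $K^m_{a,b}=K^m_{a-2,b-1}+K^m_{a-1,b}$ should encode: at each interior node of the tree, either a pivot is peeled off (contributing the term $K^m_{a-1,b}$) or it is paired with a closely linked generator in the intersection, removing two generators at once and raising the homological degree by one (contributing the term $K^m_{a-2,b-1}$).

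Second I would pick the pivot order so that the cycle is broken at the very first step: pivoting with $g_m$ cuts the cyclic structure into a path of length $m-1$ and contributes symmetrically on both endpoints of the break, which is where the factor $2$ in $2K^m_{m-3,i-1}$ comes from; the surviving part of the tree then contributes the summand $K^m_{m-2,i}$. The initial value $\beta_0(I_m)=2m$ is of course just the number of minimal generators. The base cases of the recurrence fall out by direct inspection: $K^m_{a,0}=m+a$ records the number of generators surviving after $a$ edges have been restored to a broken cycle, $K^m_{0,i}=\binom{m}{i+1}$ is the Taylor complex of the vertex generators alone (which form an ideal with pairwise coprime generators), and $K^m_{1,i}=\binom{m}{i}+\binom{m}{i+1}$ is the Taylor contribution when one edge has been added back.

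The main obstacle, just as in the linear case of Proposition \ref{prop:Betti-lines}, will be controlling the multidegrees so that the iterated mapping cone is actually minimal. Because the generators are not squarefree and the cycle is symmetric, there is a real risk that the same $\mathrm{lcm}$ appears as the generator of a relevant node along two different pivot paths, in which case the upper bound of Proposition \ref{prop:MVTbounds} would be strict. I would therefore conclude by tracking the exponent vectors explicitly and showing that the variables occurring with positive exponent in each intersection $\mathrm{lcm}$, together with the pivot that produced them, uniquely determine both the node and its multidegree; this forces $\MVT(I_m)'_{d,\mu}=\MVT(I_m)_{d,\mu}$ throughout, and Proposition \ref{prop:MVTbounds} then yields both the claimed closed form for $\beta_i(I_m)$ and the recurrence for $K^m_{a,b}$.
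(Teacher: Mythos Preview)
Your overall strategy (build a Mayer-Vietoris tree, exhibit the recurrence in the tree structure, and check that no multidegree repeats) is exactly the paper's, and your partition of the $2m$ generators into the ``vertex'' block $f_i=x_{i1}^3x_{i2}$ and the ``edge'' block $g_i=x_{i1}^2x_{(i+1)1}x_{(i+1)2}$ is also the right first move. The gap is in which block you pivot on. You propose to break the cycle by pivoting first with $g_m$ and then to read the parameter $a$ in $K^m_{a,b}$ as the number of edges still present, with $K^m_{0,i}$ the Taylor complex of the pairwise coprime vertex generators. But the divisibility relations between the two blocks are not symmetric: one has $\lcm(f_i,g_i)\mid\lcm(f_i,f_{i+1})$ and $\lcm(f_i,g_{i-1})\mid\lcm(f_i,f_{i-1})$, whereas the reverse inclusions $\lcm(g_i,f_i)\mid\lcm(g_i,g_{i-1})$ and $\lcm(g_i,f_{i+1})\mid\lcm(g_i,g_{i+1})$ fail (the cubes $x_{i1}^3$ block them). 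Concretely, if you pivot with $g_m$ the left child $\langle g_m\rangle\cap I'$ keeps all $m$ vertex-type lcm's \emph{and} all $m-1$ edge-type lcm's as minimal generators, so it has $2m-1$ generators and does not fit any $\langle m\mid a\rangle$ pattern; the recurrence $K^m_{a,b}=K^m_{a-2,b-1}+K^m_{a-1,b}$ never materialises along that branch.

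The paper instead pivots with the vertex generators $f_i$ in order. Because of the one-sided divisibilities above, when you pivot with $f_1$ the lcm's $\lcm(f_1,f_2)$ and $\lcm(f_1,f_m)$ are absorbed by $\lcm(f_1,g_1)$ and $\lcm(f_1,g_m)$, so the left child has exactly $m-3$ surviving vertex-type generators and all $m$ (suitably modified) edge-type generators; this is the special first step giving $\beta_i=K^m_{m-3,i-1}+K^m_{m-1,i}$. Thereafter each vertex pivot drops the left child from $\langle a\mid m\rangle$ to $\langle a-2\mid m\rangle$, which is the recurrence. The base case $K^m_{0,i}=\binom{m}{i+1}$ is therefore not the vertex-only ideal but the ideal of the $m$ modified edge generators; these are \emph{not} pairwise coprime, and the paper must argue separately that no size-$i$ lcm of them divides another, so that their Taylor complex is still minimal. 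Once you reverse the roles of the two blocks your plan goes through essentially as written, but as stated it would not produce the claimed tree.
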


\begin{proof}
We divide the generators of $I_m$ in two groups $A$ and $B$. Group $A$ consists on the following $m$ generators: $x_{11}^3x_{12},\dots,x_{m1}^3x_{m2}$. Group $B$ consists on the following $m$ generators:
$$x_{11}^2x_{21}x_{22},\dots,x_{(m-1)1}^2x_{m1}x_{m2},x_{m1}^2x_{11}x_{12}.$$
Since $I_m$ has $m$ generators in each of the groups we say that it is of the form $\langle m\vert m\rangle$.

To build the Mayer-Vietoris tree of $I_m$ we will first use the generators of group $A$ in the given order. When using the first generator, the ideal produced is given by the monomial $x_{11}^3x_{12}$ multiplied by each of the following $m-3$ generators from group $A$: $x_{3,1}^3x_{3,2}, \dots, x_{m-1,1}^3x_{m-1,2}$, and the following $m$ monomials, one for each generator of group $B$: $x_{2,1}x_{2,2},x_{2,1}^2x_{3,1}x_{3,2},\dots,x_{m-1,1}^2x_{m1}x_{m2},x_{m1}^2$. I.e. the obtained ideal $\widetilde{I}_m$ is of the form $\langle m-3\vert m\rangle$. The ideal $I'_m=\langle x_{21}^3x_{22},\dots,x_{m1}^2x_{11}x_{12}\rangle$ is of the form $\langle m-1\vert m\rangle$.

We continue the construction of the Mayer-Vietoris tree by using as pivots the monomials in group $A$ in their given order. If we take a pivot from an ideal of the form $\langle a\vert m\rangle$ then its left child is of the form $\langle a-2\vert m\rangle$ (or $\langle 0\vert m\rangle$ if $a\leq 2$) and the right child is of the form $\langle a-1\vert m\rangle$. Each time, when using the pivot $x_{i1}^3x_{(i+1)1}x_{(i+1)2}$ we delete generators $x_{(i+1)1}^3x_{(i+1)2}$ from group $A$ and transform the generators $x_{i1}^2x_{(i+1)1}x_{(i+1)2}$ and $x_{i-1}^2x_{i1}x_{i2}$ into $x_{i1}^3x_{i2}x_{(i+1)1}x_{(i+1)2}$ and $x_{i1}^3x_{i2}x_{i-1}^2$ respectively (observe that when we use $x_{m1}^3x_{m2}$ we transform $x_{m1}x_{11}x_{12}$ into $x_{m1}^3x_{m2}x_{11}x_{12}$).

We continue this procedure until we reach an ideal of the form $\langle 0\vert m\rangle$. Ideals of this form are minimally resolved by their Taylor complex, no matter how we choose pivots, since they consist of the list of generators $x_{11}^2x_{21}x_{22},\dots, x_{m1}^2x_{11}x_{12}$ where some of them have been substituted by their corresponding $x_{i1}^3x_{i2}x_{(i+1)1}x_{(i+1)2}$ or by $x_{(i-1)1}^3x_{(i-1)2}x_{i1}^2$. No $\lcm$ of any set of $i$ of these monomials is divisible by the $\lcm$ of any other set of $i$ of them.

The ideals of the form $\langle a\vert m\rangle$ for $a>1$ which are in an even position of dimension $i$ of the tree contribute with $a+m$ generators to $\beta_i(I_m)$. The nodes of the form $\langle 0\vert m\rangle$ in an even position of dimension $i$ of the tree contribute with ${m \choose {j-i+1}}$ generators to $\beta_j(I_m)$ for $j\geq i$. Finally, the nodes $\langle 0\vert m \rangle$ in an odd position of dimension $i$ of the tree contribute to $\beta_j(I_m)$ with ${ m\choose {j-i+1}}$ generators.

Now we add up all the contributions. The number $K^m_{a,i}$ for $a>0$ represents the contribution of a node of the form $\langle a\vert m\rangle$ to $\beta_i(I_m)$. From the above considerations we have that $K^m_{a,i}=K^m_{a-1,i}+K^m_{a-2,i-1}$ and the base cases of this recursion are $K^m_{a,0}=m+a$ for $a>0$, $K^m_{0,i}={m\choose {i+1}}$ and $K^m_{1,i}={m\choose i}+{m\choose{i+1}}$ for $i>0$. Finally, from the first step in the construction of the tree, we have that $\beta_i(I_m)=K^m_{m-3,i-1}+K^m_{m-1,i}=2K^m_{m-3,i-1}+K^m_{m-2,i}$.
\end{proof}

We can use of the following binomial identity \cite{B18}, to describe the Betti numbers of $I_m$ in a direct non-recursive way.

\begin{Proposition}
\[
K^m_{a,b}={{a+1}\choose 0}{m\choose{b+1}}+{{a}\choose 1}{m\choose{b}}+{{a-1}\choose 2}{m\choose{b-1}}+\cdots
\]
\end{Proposition}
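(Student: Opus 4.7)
The plan is to verify the closed form by induction, showing that the proposed sum satisfies both the base cases and the recurrence that defines $K^m_{a,b}$. Setting
$$f(a,b) \;=\; \sum_{k\geq 0}\binom{a+1-k}{k}\binom{m}{b+1-k},$$
with the convention that $\binom{n}{k}=0$ whenever $n<k$ or the lower argument is negative (so only finitely many terms are nonzero), I claim $K^m_{a,b}=f(a,b)$.

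First I would check the three families of base cases listed in Proposition \ref{prop:diamondsBetti}. For $a=0$ only the term $k=0$ survives, since $\binom{1-k}{k}=0$ for $k\geq 1$, giving $f(0,b)=\binom{1}{0}\binom{m}{b+1}=\binom{m}{b+1}=K^m_{0,b}$. For $a=1$ only $k=0,1$ contribute, yielding $f(1,b)=\binom{m}{b+1}+\binom{m}{b}=K^m_{1,b}$. For $b=0$ the terms with $k\geq 2$ vanish because $\binom{m}{1-k}=0$, leaving $f(a,0)=\binom{a+1}{0}\binom{m}{1}+\binom{a}{1}\binom{m}{0}=m+a=K^m_{a,0}$.

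Next I would verify that $f$ satisfies the recurrence $f(a,b)=f(a-1,b)+f(a-2,b-1)$ for $a\geq 2$ and $b\geq 1$. Writing the two summands out and reindexing the second by $k\mapsto k-1$,
$$f(a-1,b)+f(a-2,b-1)=\sum_{k\geq 0}\binom{a-k}{k}\binom{m}{b+1-k}\;+\;\sum_{k\geq 1}\binom{a-k}{k-1}\binom{m}{b+1-k},$$
and Pascal's identity $\binom{a-k}{k}+\binom{a-k}{k-1}=\binom{a-k+1}{k}$ collapses the two sums into $\sum_{k\geq 0}\binom{a+1-k}{k}\binom{m}{b+1-k}=f(a,b)$.

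Since $f$ agrees with $K^m_{a,b}$ on the base cases of the recurrence and satisfies the same two-step recursion, a straightforward induction on $a+b$ yields $K^m_{a,b}=f(a,b)$, which is exactly the claimed identity. There is no real obstacle here: the entire argument is a routine application of Pascal's identity, and the only point where one must be careful is the bookkeeping of the boundary values of the binomial coefficients when $a+1-k<k$ or $b+1-k<0$, so that the truncations of the sum are consistent with the conventions of the recurrence.
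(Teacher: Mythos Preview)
Your proof is correct. The paper itself does not supply any argument for this identity: it is stated with a citation to a personal communication (reference \cite{B18}) and left without proof. Your approach---verifying that the proposed closed form $f(a,b)=\sum_{k\ge 0}\binom{a+1-k}{k}\binom{m}{b+1-k}$ matches the three families of base cases $K^m_{0,b}$, $K^m_{1,b}$, $K^m_{a,0}$ and then showing via Pascal's identity that it satisfies the same two-step recurrence $f(a,b)=f(a-1,b)+f(a-2,b-1)$---is the natural and complete way to establish the formula. The bookkeeping with the vanishing of $\binom{a+1-k}{k}$ once $k$ is large enough is handled correctly by your convention, so there are no gaps; your argument fills in exactly what the paper omits.
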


By direct inspection of the Mayer-Vietoris tree constructed in Proposition \ref{prop:diamondsBetti} we have that 

\begin{Corollary}
For every ideal $J_m$ in the polarity class of $I_m$ we have $\reg(J_m)=2m$, $\projdim(J_m)=\lfloor\frac{m}{2}\rfloor+m-1$ and its minimal free resolution is given as an iterated cone resolution.
\end{Corollary}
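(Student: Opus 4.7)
The plan is to reduce the claim to computing $\reg$ and $\projdim$ of $I_m$ itself, and then to extract both invariants (as well as the iterated mapping cone statement) directly from the Mayer-Vietoris tree built in the proof of Proposition~\ref{prop:diamondsBetti}.

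First, I would invoke the standard fact that polarization preserves the graded Betti numbers of a monomial ideal, hence preserves both the projective dimension and the Castelnuovo--Mumford regularity. By Definition~\ref{def:polarization}, every ideal $J_m$ in the polarity class of $I_m$ satisfies $J_m^P\cong I_m^P$, so $\reg(J_m)=\reg(I_m)$ and $\projdim(J_m)=\projdim(I_m)$. It therefore suffices to verify the two numerical equalities for $I_m$. Moreover, the proof of Proposition~\ref{prop:diamondsBetti} already exhibits a Mayer-Vietoris tree of $I_m$ in which every multidegree appears in at most one relevant node; by Proposition~\ref{prop:MVTbounds} the bounds collapse and the associated iterated mapping cone resolution $\mathbb{F}$ of $I_m$ is minimal. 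Since the Mayer-Vietoris construction is, by definition, an iterated mapping cone, this immediately yields the third assertion: the minimal free resolution of $I_m$ (and hence of every $J_m$) is obtained as an iterated mapping cone.

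Second, for the projective dimension I would trace the leftmost branch of the tree. Starting from the root $\langle m\,|\,m\rangle$ at dimension $0$, the first left-move produces a child of shape $\langle m-3\,|\,m\rangle$ at dimension $1$, and each subsequent left-move decreases the first index by $2$ and raises the dimension by $1$. A short parity check shows that after $\lfloor m/2\rfloor$ left-moves one arrives at a node of the form $\langle 0\,|\,m\rangle$, whose Taylor complex on $m$ minimal generators then continues the resolution up to homological degree $\lfloor m/2\rfloor+(m-1)$. Another check---right children preserve the dimension, and every left-move decreases the first index by at least $2$---confirms that no other branch reaches a strictly greater depth, so $\projdim(I_m)=\lfloor m/2\rfloor+m-1$.

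Finally, for the regularity I would keep track of the total degrees of the monomials produced in the relevant nodes of the tree and compute $\max\{|\mu|-d\}$ over generators $\mu$ at tree-dimension $d$. Since these monomials are iterated $\lcm$s of the original degree-$4$ generators of $I_m$, and since each pivot from group $A$ contributes exactly two ``fresh'' variables to every non-redundant $\lcm$ in its left child while raising the dimension by $1$, one expects the difference $|\mu|-d$ to stabilize at $2m$ at the deepest $\langle 0\,|\,m\rangle$ nodes. I expect this degree-bookkeeping to be the principal obstacle: one must verify both that $|\mu|-d\leq 2m$ at every relevant node (not just on the leftmost path) and that the bound is in fact attained. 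The cleanest route is probably induction on $m$, comparing the Mayer-Vietoris tree of $I_m$ with a sub-tree obtained by removing the last diamond, and then exploiting the cyclic symmetry between groups $A$ and $B$ of the generators of $I_m$ to reduce degree-accounting in every left child to the same increment of $+2$.
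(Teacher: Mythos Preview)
Your proposal takes essentially the same approach as the paper, which offers only the one-line justification ``by direct inspection of the Mayer-Vietoris tree constructed in Proposition~\ref{prop:diamondsBetti}''. You in fact supply considerably more detail than the paper does: your reduction to $I_m$ via the polarization-preserves-Betti-numbers principle is correct, your trace of the leftmost branch to obtain $\projdim(I_m)=\lfloor m/2\rfloor+m-1$ is accurate (the first left-move drops the $A$-index by $3$ and all subsequent ones by $2$, landing on a $\langle 0\,|\,m\rangle$ node at dimension $\lfloor m/2\rfloor$), and the iterated-cone claim follows exactly as you say from the absence of repeated multidegrees. The regularity computation, which you flag as the principal obstacle, is indeed where the inspection requires the most care; the paper does not carry out this degree-bookkeeping either, so your sketch is at least as complete as the published argument.
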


\subsection{Leaf ideals of trees and forests}
A more general class of posets are trees and forests. For them we can identify supported monomial ideals for which we can compute the main invariants.
\begin{Proposition}\label{prop:leafIdeal}
Let $\P$ be a tree with nodes $\{1,\dots, n\}$ and let  $\{l_1,\dots, l_k\}\subseteq \{1,\dots,n\}$ be the set of leaves of the tree. There exists a squarefree monomial ideal $I_{L}(\P)\subseteq \kb[x_1,\dots,x_n]$ with $k$ generators such that  $\P$ is it support poset. The Taylor resolution of  $I_{L}(\P)$ minimally resolves it and therefore $\beta_i(I_{L}(\P))={k\choose i+1}$ for all $i\geq 0$.
\end{Proposition}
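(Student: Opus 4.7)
The plan is to give an explicit construction for $I_L(\P)$. View $\P$ as a tree rooted at its unique minimum and, for each leaf $l_j$, let
\[
m_{l_j} \;:=\; \prod_{i\leq l_j} x_i
\]
be the squarefree monomial whose variables correspond to the nodes on the unique path from the root of $\P$ up to $l_j$. The candidate ideal is $I_L(\P):=\langle m_{l_1},\dots,m_{l_k}\rangle$. Checking that $\{m_{l_1},\dots,m_{l_k}\}$ is a minimal generating set of size $k$ will be immediate: since two distinct leaves are incomparable in $\P$, the variable $x_{l_j}$ divides $m_{l_j}$ but none of the other $m_{l_{j'}}$, so no generator divides another.

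The core step will be to verify that $\suppPos(I_L(\P))=\P$, equivalently that $C_i=\{j\in[n]:j\leq i\text{ in }\P\}$ for every node $i$. Since the generators containing $x_i$ are precisely the $m_l$ indexed by leaves $l\geq i$ of $\P$,
\[
C_i \;=\; \bigcap_{\substack{l\text{ leaf}\\ l\geq i}}\{j:j\leq l\}.
\]
The inclusion $\{j:j\leq i\}\subseteq C_i$ is automatic. For the reverse, given $j\not\leq i$ I would exhibit a leaf $l\geq i$ with $j\not\leq l$ by following a branch upward out of $i$ which avoids $j$; the tree structure of $\P$ guarantees such a branch exists, because distinct branches of the subtree rooted at $i$ meet only at $i$ itself.

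For the resolution statement I would invoke the standard criterion that the Taylor complex on $m_{l_1},\dots,m_{l_k}$ is minimal precisely when, for every subset $S$ and every $m_{l_j}\in S$, $\lcm(S)\neq\lcm(S\setminus\{m_{l_j}\})$. Because $x_{l_j}$ occurs in $m_{l_j}$ and in no other generator, removing $m_{l_j}$ from $S$ strictly drops the lcm (the factor $x_{l_j}$ disappears), so Taylor minimality follows and consequently $\beta_i(I_L(\P))={k\choose i+1}$. The main obstacle will be the second step above: showing that no extra index sneaks into $C_i$. This is exactly where the tree hypothesis on $\P$ is essential, as it provides the branching needed to separate $j$ from $i$ by a suitably chosen leaf.
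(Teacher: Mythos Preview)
Your construction of $I_L(\P)$ and your outline for showing $\suppPos(I_L(\P))=\P$ coincide with the paper's; the paper in fact just asserts $C_i=\{j:j\leq i\}$ without your two-inclusion breakdown. (A small caveat, shared with the paper: the reverse inclusion $C_i\subseteq\{j:j\leq i\}$ is not literally true when $i$ has a unique child $c$, since then $c\in C_i$ as well---your ``follow a branch avoiding $j$'' fails precisely here. This is why the paper's own argument later collapses such chains, and it is really a statement-level imprecision rather than a flaw in your strategy.) The substantive divergence is in how Taylor minimality is established. The paper argues by reduction: first contract every single-child chain to one node, obtaining an isomorphic ideal $I_L(\P')$; then delete the root $\overline a$ to write $I_L(\P')=x_{\overline a}\cdot\sum_i I_L(\P'_i)$ with the summands supported on pairwise disjoint variable sets; iterate until only the $k$ leaves remain as isolated points, whence the Betti numbers match those of the prime ideal $\langle x_{l_1},\dots,x_{l_k}\rangle$. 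Your route is shorter and avoids the recursion entirely: since $x_{l_j}$ divides $m_{l_j}$ and no other generator, removing $m_{l_j}$ from any subset $S$ strictly lowers the lcm, which is exactly the criterion for every entry of the Taylor differential to lie in the maximal ideal. Both approaches are valid; yours is more economical, while the paper's decomposition makes the tree structure do visible work and exhibits the comparison with the Koszul-type ideal on the leaf variables as a byproduct.
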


\begin{proof}
Consider the ideal $I_L(\P)=\langle m_{l_1},\dots,m_{l_k}\rangle$ where $m_{l_i}=\prod_{i<l_i}x_i$; here $i<j$ means that $i$ is an ancestor of $j$. We have that $\P$ is the support poset of $I_L(\P)$. To see this, observe that given a variable $x_i$, the set $C_i$ of variables that appear in every generator in which $x_i$ appears is formed by the variables $x_j$ such that $j<i$ in $\P$.

To see that the Taylor complex of $I_L(\P)$ minimally resolves it and therefore $\beta_i(I_L(\P))={k\choose i+1}$ for all $i$, consider the following process:

First, for each node $a$ such that it is the unique child of node $b$ we identify both in a new node $\overline{a}$ and in the corresponding ideal we substitute $x_ax_b$ by $x_{\overline{a}}$. We proceed in the same way until we obtain a reduced tree $\P'$ such that each node is either a leaf or has more than one child. Observe that $I_L(\P)\simeq I_L(\P')$ and hence $\beta_i(I_L(\P))=\beta_i(I_L(\P'))$ for all $i$.

Now, take the root $\overline{a}$ of the reduced tree $\P'$ whose children are $\overline{b}_1,\dots,\overline{b}_{m}$. Delete $\overline{a}$ and we are left with a set of $m$ disjoint trees $\P'_1,\dots,\P'_m$ whose nodes are either leaves or have more than one children. Observe that $I_L(\P')= x_{\overline{a}}\cdot \sum_{i=1}^m I_L(\P'_i)$, where multiplication by $x_{\overline{a}}$ means that we multiply each generator of each of the ideals $I_L(\P'_i )$ by $x_{\overline{a}}$ and the ideals $I_L(\P'_i)$ are supported on mutually disjoint sets of variables, hence the sum is direct and $\beta_i(I_L(\P'))=\sum_{j=1}^m \beta_i(I_L(\P'_j))$.

By repeated us of this process we obtain that $I_L(\P)$ has the same total Betti numbers than a prime monomial ideal generated by  one variable for each of its leaves, and hence the result.
\end{proof}

\begin{Example}\label{ex:leafIdeal}
Consider the tree depicted in Figure \ref{fig:leafIdeal} together with its reducing process as described in Proposition \ref{prop:leafIdeal}. We have that the leaf ideal of $\P$ is given by
$$I_L(\P)=\langle x_1x_2x_3x_4,x_1x_2x_3x_5x_6,x_1x_2x_3x_7x_8x_9,x_1x_2x_3x_7x_8x_{10}x_{11},x_1x_2x_3x_7x_8x_{10}x_{12}\rangle,$$
which has the same total Betti numbers than the prime ideal generated by the variables corresponding to the leaves. i.e. $\langle x_4,x_6,x_9,x_{11},x_{12}\rangle$.
 
\begin{figure}[h]
\centering
\begin{subfigure}[b]{0.15\textwidth}
\centering
      \begin{tikzpicture}[scale=.5, transform shape, vertices/.style={text width= 1.5em,align=center,draw=black, fill=white, circle, inner sep=1pt}]
             \node [vertices] (0) at (3,0){1};
             \node [vertices] (1) at (3,1.5){2};
             \node [vertices] (2) at (3,3){3};
             \node [vertices] (3) at (1.5,4.5){4};
             \node [vertices] (4) at (3,4.5){7};
             \node [vertices] (5) at (4.5,4.5){5};
             \node [vertices] (6) at (3,6){8};
             \node [vertices] (7) at (4.5,6){6};
             \node [vertices] (8) at (2,7.5){9};
             \node [vertices] (9) at (4,7.5){10};
             \node [vertices] (10) at (3,9){11};
             \node [vertices] (11) at (5,9){12};

              \foreach \to/\from in {0/1, 1/2, 2/3, 2/4, 2/5, 5/7, 4/6, 6/8, 6/9,9/10,9/11}
     \draw [-] (\to)--(\from);		
     \end{tikzpicture}
 \end{subfigure} 
\begin{subfigure}[b]{0.15\textwidth}
\centering
\begin{tikzpicture}[scale=.5, transform shape, vertices/.style={text width= 1.5em,align=center,draw=black, fill=white, circle, inner sep=1pt}]
             \node [vertices] (2) at (3,3){$\overline{3}$};
             \node [vertices] (3) at (1.5,4.5){4};
             \node [vertices] (5) at (4.5,4.5){$\overline{6}$};
             \node [vertices] (6) at (3,6){$\overline{8}$};
             \node [vertices] (8) at (2,7.5){9};
             \node [vertices] (9) at (4,7.5){10};
             \node [vertices] (10) at (3,9){11};
             \node [vertices] (11) at (5,9){12};

              \foreach \to/\from in {2/3, 2/6, 2/5, 6/8, 6/9,9/10,9/11}
     \draw [-] (\to)--(\from);		
     \end{tikzpicture}
 \end{subfigure}
 \begin{subfigure}[b]{0.15\textwidth}
\centering
\begin{tikzpicture}[scale=.5, transform shape, vertices/.style={text width= 1.5em,align=center,draw=black, fill=white, circle, inner sep=1pt}]
             \node [vertices] (3) at (1,4.5){4};
             \node [vertices] (5) at (5,4.5){$\overline{6}$};
             \node [vertices] (6) at (3,4.5){$\overline{8}$};
             \node [vertices] (8) at (2,6){9};
             \node [vertices] (9) at (4,6){10};
             \node [vertices] (10) at (3,7.5){11};
             \node [vertices] (11) at (5,7.5){12};

              \foreach \to/\from in {6/8, 6/9,9/10,9/11}
     \draw [-] (\to)--(\from);		
     \end{tikzpicture}
      \end{subfigure}     
       \begin{subfigure}[b]{0.20\textwidth}
\centering
\begin{tikzpicture}[scale=.5, transform shape, vertices/.style={text width= 1.5em,align=center,draw=black, fill=white, circle, inner sep=1pt}]
             \node [vertices] (3) at (1,4.5){4};
             \node [vertices] (5) at (6,4.5){$\overline{6}$};
             \node [vertices] (8) at (2,4.5){9};
             \node [vertices] (9) at (4,4.5){10};
             \node [vertices] (10) at (3,6){11};
             \node [vertices] (11) at (5,6){12};

              \foreach \to/\from in {9/10,9/11}
     \draw [-] (\to)--(\from);		
     \end{tikzpicture}
      \end{subfigure}
             \begin{subfigure}[b]{0.20\textwidth}
\centering
\begin{tikzpicture}[scale=.5, transform shape, vertices/.style={text width= 1.5em,align=center,draw=black, fill=white, circle, inner sep=1pt}]
             \node [vertices] (3) at (1,4.5){4};
             \node [vertices] (5) at (5,4.5){$\overline{6}$};
             \node [vertices] (8) at (2,4.5){9};
             \node [vertices] (9) at (3,4.5){11};
             \node [vertices] (10) at (4,4.5){12};
     \end{tikzpicture}
      \end{subfigure}

  \caption{Reduction process of tree $\P$ in Example \ref{ex:leafIdeal}}
  \label{fig:leafIdeal}
 \end{figure}
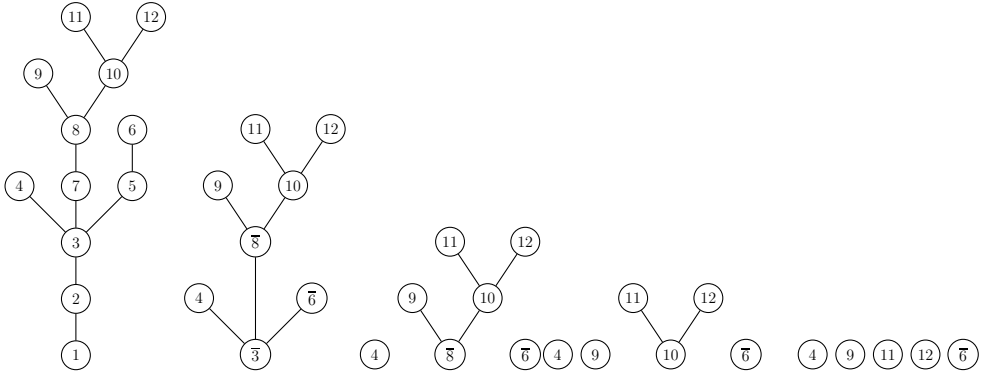
\end{Example}

We call $I_L(\P)$ the {\em leaf ideal} of $\P$. Generalizing Proposition \ref{prop:leafIdeal} we obtain the following result.
\begin{Theorem}\label{th:leafIdeal}
Let $\P$ be a forest whose trees $\P_1,\dots, \P_m$ have $n_i$ nodes and $l_i$ leaves each, for $i=1,\dots,m$. Then there is a squarefree monomial ideal $I_L(\P)\subseteq\kb[x_1,\dots,x_n]$, $n=\sum_{i=1}^m n_i$ whose support poset is $\P$. The ideal $I_L(\P)$ has $g=\sum_{i=1}^m l_i$ minimal monomial generators, its Taylor complex minimally resolves it, and $\beta_i(I_L(\P))=\sum_{j=1}^m{l_j\choose {i+1}}$ for all $i$.
\end{Theorem}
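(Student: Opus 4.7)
The plan is to reduce every claim to the tree case of Proposition~\ref{prop:leafIdeal} by placing each tree's leaf ideal in its own disjoint block of variables and then combining the pieces.

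First, I would partition the variables $\{x_1,\dots,x_n\}$ into disjoint subsets $X_1,\dots,X_m$ indexed by the node sets of $\P_1,\dots,\P_m$. Inside each polynomial subring $\kb[X_j]$, Proposition~\ref{prop:leafIdeal} produces a squarefree ideal $I_L(\P_j)$ with $l_j$ generators, support poset $\P_j$, and Taylor-minimal free resolution with $\beta_i(I_L(\P_j))=\binom{l_j}{i+1}$. I then set
\[
I_L(\P)=\sum_{j=1}^{m} I_L(\P_j)\subseteq \kb[x_1,\dots,x_n],
\]
whose minimal generating set is the disjoint union of the $G(I_L(\P_j))$, giving $g=\sum_j l_j$ generators.

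Next, for the support poset I observe that a variable $x_i\in X_j$ appears only in generators coming from $I_L(\P_j)$, since generators of different trees live in disjoint variable sets. Hence the set $C_i$ computed inside $I_L(\P)$ equals the one computed inside $I_L(\P_j)$, and $\suppPos(I_L(\P))$ is the disjoint union of the $\suppPos(I_L(\P_j))=\P_j$, which is $\P$.

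For Taylor minimality of $I_L(\P)$ I plan to use the standard criterion that $\lcm(\sigma)/\lcm(\sigma\setminus\{i\})$ must be a non-unit for every $\sigma\subseteq G(I_L(\P))$ with $|\sigma|\geq 2$ and every $i\in\sigma$. Partitioning $\sigma=\bigsqcup_j\sigma_j$ with $\sigma_j\subseteq G(I_L(\P_j))$, disjointness of variables gives $\lcm(\sigma)=\prod_j\lcm(\sigma_j)$, so removing $i\in\sigma_{j_0}$ alters only the $j_0$-th factor. If $|\sigma_{j_0}|\geq 2$, the Taylor minimality from Proposition~\ref{prop:leafIdeal} for $\P_{j_0}$ produces the required strict inequality; if $|\sigma_{j_0}|=1$, the ratio equals the single generator indexed by $i$, itself a non-unit.

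Finally, the Betti numbers follow by combining the tree-by-tree resolutions in exactly the same manner as the recursive splitting step in the proof of Proposition~\ref{prop:leafIdeal}, where deletion of a reduced root exhibits the ideal as a sum on mutually disjoint variable sets whose individual resolutions are assembled independently. Iterating this across the $m$ trees yields $\beta_i(I_L(\P))=\sum_{j=1}^m\binom{l_j}{i+1}$. I expect this last combination step to be the main technical obstacle: it has to ensure that the minimal resolution of a sum of ideals in disjoint variables truly decomposes as in the tree case, i.e.\ that no cross-tree syzygies contribute to the minimal Betti numbers. The most transparent route is to make the disjoint-support decomposition of Proposition~\ref{prop:leafIdeal} explicit by exhibiting a common-factor shift that reduces the forest setting to a prime monomial ideal whose generator set partitions along the trees, so that the Betti count splits additively across $j=1,\dots,m$.
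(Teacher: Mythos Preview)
The paper offers no separate proof of this theorem; it is simply stated as the direct generalization of Proposition~\ref{prop:leafIdeal}. Your construction $I_L(\P)=\sum_j I_L(\P_j)$ on disjoint variable blocks, the verification that $\suppPos(I_L(\P))=\P$, the generator count, and your Taylor-minimality argument are exactly that generalization made explicit, and they are all correct.

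The difficulty you flag in the last step is real, but it is a defect of the \emph{statement} rather than of your argument. Taylor minimality, which you have just established, forces $\beta_i(I_L(\P))=\binom{g}{i+1}$ with $g=\sum_j l_j$; for $m>1$ this is \emph{not} $\sum_j\binom{l_j}{i+1}$. Already for a forest of two isolated nodes one has $I_L(\P)=\langle x_1,x_2\rangle$ with $\beta_1=1$, while the printed formula gives $0$. The cross-tree Koszul syzygies you worried about do contribute, and Betti numbers of a sum of ideals on disjoint variable sets are not additive (the same slip appears inside the paper's proof of Proposition~\ref{prop:leafIdeal} in the line ``$\beta_i(I_L(\P'))=\sum_{j}\beta_i(I_L(\P'_j))$''; there the final answer $\binom{k}{i+1}$ is nonetheless correct because the reduction terminates at a single prime ideal on all $k$ leaf variables). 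So keep your Taylor-minimality argument, conclude $\beta_i=\binom{g}{i+1}$, and abandon the attempt to recover the additive formula---it cannot be recovered.
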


\section{Support posets of some families of ideals}\label{sec:givenIdeal}
We turn now to the second question that we address in this paper: given a certain class of monomial ideals, how can we describe their support posets? We will treat consecutive $k$-out-of-$n$ ideals (equivalently path ideals of line graphs cf. \cite{HV10}) and series-parallel ideals, i.e. path ideals of series-parallel systems cf. \cite{SW10}.
\subsection{Consecutive linear $k$-out-of-$n$ ideals}
A $k$-out-of-$n$ ideal $I_{k,n}\subseteq \kb[x_1,\dots,x_n]$ is an ideal generated by all possible products of $k$ variables. One can easily see that the support poset of such ideals is a collection of $n$ isolated points, hence it is the only ideal in its polarity class. Consecutive $k$-out-of-$n$ ideals are generated by the products of any $k$ consecutive variables, $J_{k,n}=\langle x_1\cdots x_k, x_2\cdots x_{k+1},\dots,x_{n-k+1}\cdots x_n\rangle$. These ideals are the path ideals of the line graph, and their main characteristics are well known \cite{HV10,SW11}. Here we describe their support poset.
\begin{Proposition}
Let $J_{k,n}=\langle x_1\cdots x_k, x_2\cdots x_{k+1},\dots,x_{n-k+1}\cdots x_n\rangle$ a consecutive $k$-out-of-$n$ ideal. Then its support poset $\P_{k,n}$ is given by
\begin{itemize}
\item[-]{If $k<n-k+1$:
 \[ 
 C_i=
 \begin{cases} 
      \{i,\dots,k\} & i=1,\dots,k \\
      \{i\} & i=k+1,\dots,n-k \\
      \{n-k+1,\dots,i\} & i=n-k+1,\dots,n  
   \end{cases}
\]
}
\item[-]{If $k\geq n-k+1$:
 \[
 C_i=
  \begin{cases} 
      \{i,\dots,k\} & i=1,\dots,n-k \\
      \{n-k+1,\dots,k\} & i=n-k+1,\dots, k\\
      \{n-k+1,\dots,i\} & i=k+1,\dots,n  
   \end{cases}
\]
}
\end{itemize}
\end{Proposition}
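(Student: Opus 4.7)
The strategy is a direct computation of $C_i$ from the definition, exploiting the interval structure of the generators. Write $g_j = x_j x_{j+1}\cdots x_{j+k-1}$ for $j=1,\ldots,n-k+1$, so that $\supp(g_j)$ is the integer interval $[j, j+k-1]$. The variable $x_i$ divides $g_j$ iff $j \le i \le j+k-1$, i.e.\ iff $j$ lies in the interval $[a(i),b(i)]$ with
\[
a(i)=\max(1,\,i-k+1), \qquad b(i)=\min(n-k+1,\,i).
\]

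Next, I would observe that an intersection of a contiguous family of length-$k$ intervals of integers is itself an interval:
\[
C_i \;=\; \bigcap_{j=a(i)}^{b(i)} [j,\,j+k-1] \;=\; \bigl[\,b(i),\; a(i)+k-1\,\bigr] \;=\; \bigl[\,\min(n-k+1,i),\; \max(k,i)\,\bigr],
\]
using $a(i)+k-1=\max(k,i)$. This single formula already encodes the whole support poset; the two-case presentation in the statement is merely the result of removing the $\min$ and $\max$ in terms of $i$.

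The remainder is bookkeeping. I would split into the two cases according to whether $k<n-k+1$ or $k\ge n-k+1$, and within each case split the range $i=1,\ldots,n$ into three subranges so that both $\min(n-k+1,i)$ and $\max(k,i)$ are resolved unambiguously; checking each subrange reproduces the piecewise formula in the statement. In Case 1, the middle subrange $k+1\le i\le n-k$ is nonempty precisely because $k<n-k+1$, and there $C_i=\{i\}$; in Case 2 the inequality $k\ge n-k+1$ forces $n-k\le k-1$, so the three subranges $i\in[1,n-k]$, $i\in[n-k+1,k]$, $i\in[k+1,n]$ partition $[n]$ and yield the stated values. There is no substantive obstacle; the only thing to be careful about is the boundary indices (for instance, confirming $i\in C_i$ in every case), which is immediate from the formula $C_i=[\min(n-k+1,i),\max(k,i)]$ since $\min(n-k+1,i)\le i\le\max(k,i)$.
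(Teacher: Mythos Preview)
Your proof is correct and follows essentially the same approach as the paper: a direct computation of each $C_i$ from the definition by identifying which generators contain $x_i$ and intersecting their supports. Your presentation is somewhat more streamlined, since you first derive the unified closed formula $C_i=[\min(n-k+1,i),\max(k,i)]$ and only then unpack it into cases, whereas the paper argues case by case from the outset (starting with $C_1=\{1,\dots,k\}$, then $C_2=C_1\setminus\{1\}$, and so on, using symmetry for the other end); but the underlying computation is the same.
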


The form of $\P_{k,n}$ is given in Figure \ref{fig:supportPosetConsKNl}

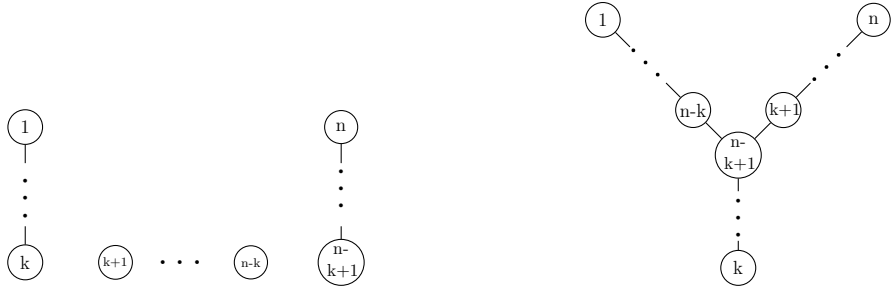
\begin{figure}[h]
\centering
\begin{subfigure}[b]{0.48\textwidth}
\centering
\begin{tikzpicture}[scale=.6, transform shape, vertices/.style={text width= 1.5em,align=center,draw=black, fill=white, circle, inner sep=1pt}]
             \node [vertices] (1) at (1,0){k};
             \node [vertices] (2) at (1,3){1};

             \node [vertices] (3) at (3,0){{\tiny k+1}};
             \node [vertices] (4) at (6,0){{\tiny n-k}};
             \node [vertices] (5) at (8,0){{\small n-k+1}};
             \node [vertices] (6) at (8,3){n};
        
      \path (1) -- (2) node [font=\Huge, midway, sloped] {$\dots$};
      \path (3) -- (4) node [font=\Huge, midway, sloped] {$\dots$};
      \path (5) -- (6) node [font=\Huge, midway, sloped] {$\dots$};
      
      \draw [-] (1)--(1,.8); \draw [-] (2)--(1,2.2);
      \draw [-] (5)--(8,.8); \draw [-] (6)--(8,2.2);
\end{tikzpicture}
\caption{Support poset of $J_{k,n}$ for $k<n-k+1$.}
 \end{subfigure} 
\begin{subfigure}[b]{0.48\textwidth}
\centering
\begin{tikzpicture}[scale=.6, transform shape, vertices/.style={text width= 1.5em,align=center,draw=black, fill=white, circle, inner sep=1pt}]
             \node [vertices] (1) at (1,3){1};
             \node [vertices] (2) at (3,1){n-k};
             \node [vertices] (3) at (4,0){{\small n-k+1}};
             \node [vertices] (4) at (4,-2.5){k};
             \node [vertices] (5) at (5,1){{\small k+1}};
             \node [vertices] (6) at (7,3){n};
        
      \path (1) -- (2) node [font=\Huge, midway, sloped] {$\dots$};
      \path (3) -- (4) node [font=\Huge, midway, sloped] {$\dots$};
      \path (5) -- (6) node [font=\Huge, midway, sloped] {$\dots$};
      
      \draw [-] (2)--(3); \draw [-] (1)--(1.6,2.4);\draw [-] (2)--(2.4,1.6);
      \draw [-] (3)--(5); \draw [-] (5)--(5.6,1.6);\draw [-] (6)--(6.4,2.4);
      \draw [-] (3)--(4,-0.8); \draw [-] (4)--(4,-1.9);

\end{tikzpicture}
\caption{Support poset of $J_{k,n}$ for $k\geq n-k+1$.}
 \end{subfigure} 

  \caption{Form of the support posets of consecutive $k$-out-of-$n$ ideals.}
  \label{fig:supportPosetConsKNl}
 \end{figure}
\begin{proof}
Observe that $x_1$ is only present in the generator $x_1\cdots x_k$ hence $C_1=\{1,\dots,k\}$. $x_2$ is present in generators  $x_1\cdots x_k$ and  $x_2\cdots x_{k+1}$ hence $C_2=C_1-\{1\}$, then $C_3=C_2-\{2\}$ and so on. By symmetry, $x_n$ is only present in generator $x_{n-k+1}\cdots x_n$ so $C_n=\{x_{n-k+1},\dots,x_n\}$ and $C_{n-1}=C_n-\{n\}$ etc.

If $k<n-k+1$ then we get in this way $C_1,\dots,C_k$ and $C_{n-k+1},\dots,C_n$. For all the $C_i$ with $k<i<n-k+1$ observe that $i$ appears in generators $x_{i-k+1}\cdots x_i$ to $x_i\dots x_{i+k-1}$ and these generators have only one variable in common, namely $x_i$, hence $C_i=\{i\}$.

If $k\geq n-k+1$ observe that $C_{n-k+1}=\cdots=C_{k}=\{n-k+1,\dots, k\}$ since the monomial $x_{n-k+1}\cdots x_k$ divides every generator of $J_{k,n}$, and for each $j<n-k+1$ the variable $x_j$ is only present in generators $x_i\cdots x_k$ for $i<j$ (and by symmetry, for every $j>k$ variable $x_j$ is only present in generators $x_i\cdots x_n$ for $i>j$).
\end{proof}

\begin{Remark}
Using the depolarization poset as described in \cite{MPSW19} we see that if $k\geq n-k+1$ there is a monomial ideal $J'_{k,n}$ copolar to $J_{k,n}$ in only two variables, namely 
\[
J'_{k,n}=\langle a^k,a^{k-1}b,\dots, a^{2k-n}b^{n-k}\rangle,
\]
which is isomorphic to the zero-dimensional ideal in two variables 
\[
J''_{k,n}=\langle a^{n-k},a^{n-k-1}b,\dots,b^{n-k}\rangle.
\]

If $k<n-k+1$ then we have that there is an ideal in $2+n-2k$ variables copolar to $J_{k,n}$, namely 
\[
J'_{k.n}=\langle a^k,a^{k-1}b_1,\dots,a^{3k-n}b_1\cdots b_{n-2k},a^{3k-n-1}b_1\cdots b_{n-2k}c,\dots
\]
\[
\dots,ab_1\cdots b_{n-2k}c^{3k-n-1}, b_1b_{n-2k}c^{3k-n},\dots,b_{n-2k}c^{k-1},c^k\rangle.
\]
These reductions in the number of variables improve drastically the computation times of the Betti numbers and other invariants for this kind of ideals.
\end{Remark}

\begin{Remark}\label{rm:differentIdeals}
The support poset $\P$ of a $J_{k,n}$ ideal is always a tree or forest. We could then use Proposition \ref{prop:leafIdeal} to construct the leaf ideal $I_L(\P)$ of $\P$. Observe that $I_L(\P)\neq J_{k,n}$. This is an example that a given poset $\P$ may be the support poset of different squarefree ideals.
\end{Remark}

\subsection{Series-parallel ideals}
Series-parallel ideals are defined as the cut ideals of series-parallel networks, a prominent class of coherent systems, cf. \cite{SW10}. We can define these ideals in the following way
\begin{Definition}
The ideal $I=\langle x_1\rangle\subseteq \kb[x_1]$ is called a {\em basic series-parallel} ideal. If $I_1\subseteq \kb[x_1,\dots,x_n]$ and $I_2\subseteq \kb[x_{n+1},\dots,x_{n+m}]$ are series-parallel ideals then $I'_1+I'_2$ and $I'_1\cap I'_2$ in $\kb[x_1,\dots,x_{n+m}]$ are series-parallel ideals, where $I'_1$ is the image of $I_1$ under the inclusion $\kb[x_1,\dots,x_n]\subseteq\kb[x_1,\dots,x_{n+m}]$ and $I'_2$ is the image of $I_2$ under the inclusion $\kb[x_{n+1},\dots,x_{n+m}]\subseteq\kb[x_1,\dots,x_{n+m}]$.
\end{Definition}

\begin{Theorem}\label{th:spIdeal}
The support poset of any series-parallel ideal is a forest.
\end{Theorem}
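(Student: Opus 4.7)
The plan is to proceed by induction on the recursive construction of $I$, strengthening the statement to: the support poset of any series-parallel ideal is a forest in which each connected component possesses a unique minimum (hence each component is a rooted tree). The base case $I = \langle x_1\rangle$ is immediate since $C_1 = \{1\}$ gives a single node.

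For the series case $I = I_1 + I_2$ with disjoint variable sets, the minimal generating set is $G(I) = G(I_1)\sqcup G(I_2)$, and since no generator of $I_\alpha$ contains any variable of $I_{3-\alpha}$, the set $C_i$ computed with respect to $I$ coincides with the one computed with respect to $I_\alpha$ for every $i$ in $I_\alpha$'s variable set. Hence $\suppPos(I) = \suppPos(I_1)\sqcup \suppPos(I_2)$, which is a forest by the inductive hypothesis.

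The parallel case is the substantive one. Since the variables are disjoint, $I_1\cap I_2 = I_1 I_2$ and $G(I) = \{fg : f\in G(I_1),\, g\in G(I_2)\}$, all products being distinct and minimal. Setting $K_\alpha = \bigcap_{g\in G(I_\alpha)}\supp(g)$, a direct computation yields, for $i$ a variable index of $I_1$,
\[
C_i^I \;=\; C_i^{I_1}\cup K_2,
\]
and symmetrically for $I_2$. From this I would read off two consequences. First, within each variable set the order of $\suppPos(I)$ coincides with that of the corresponding $\suppPos(I_\alpha)$, since $K_2$ (respectively $K_1$) is a fixed set disjoint from the $I_1$-variables (respectively $I_2$-variables). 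Second, for $i$ from $I_1$'s variables and $j$ from $I_2$'s, we have $C_i^I\subseteq C_j^I$ iff $i\in K_1$; this uses the automatic inclusion $K_\alpha\subseteq C_k^{I_\alpha}$ for every $k$ in the variable set of $I_\alpha$, together with the fact that $i\in C_i^{I_\alpha}$ always. Combined with the identity $k\in C_i \iff C_k\subseteq C_i$ from the preliminaries, this gives that $K_\alpha\neq\emptyset$ iff $\suppPos(I_\alpha)$ has a unique minimum, which is then precisely the node labelled by $K_\alpha$.

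A short case analysis on whether $K_1$ and $K_2$ are empty then completes the argument. When both are empty there are no cross-comparabilities, so $\suppPos(I) = \suppPos(I_1)\sqcup\suppPos(I_2)$. When exactly one, say $K_1$, is nonempty, the minimum node $K_1$ of $\suppPos(I_1)$ acquires as additional covers exactly the roots of the trees of $\suppPos(I_2)$, producing a single rooted tree. When both are nonempty, the two minima $K_1$ and $K_2$ merge into the single node $K_1\cup K_2$, which is the root of the tree obtained by gluing $\suppPos(I_1)$ and $\suppPos(I_2)$ along their roots. The main obstacle is the cover-relation check verifying that no cycle is introduced in the Hasse diagram; this reduces to the incomparability of non-minimum elements of $\suppPos(I_1)$ with non-minimum elements of $\suppPos(I_2)$, which is immediate from the formula $C_i^I = C_i^{I_1}\cup K_2$ once one observes that $C_i^{I_1}$ strictly contains $K_1$ exactly when $i\notin K_1$. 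In each case, the resulting poset is a forest with each component rooted, closing the induction.
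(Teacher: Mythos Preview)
Your proof is correct and follows essentially the same approach as the paper: structural induction on the series-parallel construction, with the intersection step handled by a case analysis on whether each factor's support poset has a unique minimum (what the paper calls a ``trunk''). Your treatment is slightly cleaner in that you make the strengthened inductive hypothesis explicit and write down the key formula $C_i^I = C_i^{I_1}\cup K_2$, whereas the paper leaves these implicit; but the underlying ideas and case structure are the same.
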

\begin{proof}
Let $I$ be a series-parallel ideal. We will give a constructive proof following a building process of $I$.

First, the support poset of a basic series-parallel ideal $I=\langle x_1\rangle$ has a single element $1$, which is a basic forest.

Now, we start constructing $I$ by joining basic series-parallel ideals, i.e. ideals of the form $\langle x_i \rangle$ one at a time. If we join  $\langle x_i \rangle$ and  $\langle x_j \rangle$ by union, the resulting ideal is  $\langle x_i,x_j \rangle$ whose poset is the disjoint union of two points. If we join them by intersection, we obtain  $\langle x_ix_j \rangle$ whose poset is a line with two points. Whenever we join a new basic series-parallel ideal  $\langle x_i \rangle$ we either join it by addition, in which case we have a new disjoint point in the support poset of the new ideal, or we add it by intersection, in which case we obtain the new poset by setting $i$ as its unique minimal element and joining the minimal element of each connected component of the previous support poset to $i$. Hence, whenever our series-parallel ideal is built by joining on new basic series-parallel ideal at a time its support poset is a tree plus zero or more disjoint points.

The next step is joining two of these ideals $I_1\subseteq\kb[x_1,\dots,x_n]$ and $I_2\subseteq\kb[x_{n+1},\dots,x_{n+m}]$ whose support posets we denote by $T_1$ and $T_2$, and the support poset of the resulting ideal by $T$. If $I=I_1+I_2$ then $T$ is the disjoint union of $T_1$ and $T_2$ since the two ideals have separate sets of variables.

If $I=I_1\cap I_2$ then the minimal monomial generating set of $I$ is given by all the products $\{m_im'_j\vert m_i \mbox { is a generator of } I_1,\, m'_j \mbox{ is a generator of } I_2\}$ and we can be in one of the following three cases:

\begin{enumerate}
\item[i)] $T_1$ and $T_2$ have more than one connected component each. In this case, $T$ is the disjoint union of $T_1$ and $T_2$. To see this, observe that there are no indices $i\in\{1,\dots,n\}$ and $j\in\{n+1,\dots,n+m\}$ such that $i<j$ or $j<i$. If this was not the case, assume we have $i<j$, then whenever $x_j$ appears in a generator of $I$ so does $x_i$. But we have that  for every generator $g$ of $I_1$ there is a generator in $I$ of the form $\mu x_{j}g$ with $\mu\in\kb[x_{n+1},\dots,x_{n+m}]$, hence $x_i$ is in every generator of $I_1$ and hence $T_1$ has one connected component which contradicts our asumption.
\item[ii)] Either $T_1$ or $T_2$ have one connected component. Let $T_1$ have a single connected component, then $T_1$ is a tree which has a set of elements $b=\{i_1,\dots, i_k\}\subseteq\{1,\dots,n\}$ such that $C_{i_1}=\dots=C_{i_k}$ and $j>i_a$ for every $i_a\in b$, $j\in\{1,\dots, n\}$ not in $b$, i.e. $b$ is the {\em trunk} of the tree $T_1$. Then $T$ is formed by the union of $T_1$ and $T_2$ plus a connection from $\max(b)$ to every minimal element in $T_2$. This is because in $I$ there are no new relations among the variables $\{x_1,\dots,x_n\}$ or among the variables in $\{x_{n+1},\dots,x_{n+m}\}$. Observe that all variables in $b$ appear in every generator of $I$.

\item[iii)] Both $T_1$ and $T_2$ have only one connected component each. Then let $b$ and $b'$ be the maximal elements of their respective trunks. The support poset of $T$ is the result of identifying both trunks.
\end{enumerate}
\end{proof}

\begin{Remark}
Observe that Theorem \ref{th:spIdeal} provides a criterion to detect ideals that cannot be obtained as a series-parallel ideal. For instance, the ideal $I$ in Example \ref{ex:support-posets} (3) cannot be obtained as a series-parallel ideal since its support poset is not a forest.
\end{Remark}
We have just seen that for every series-parallel ideal $I$ we have a tree $\P_I$ such that $\P$ is the support poset of $I$. Our next results states that the converse is also true.

\begin{Proposition}\label{prop:spIdeal}
Let $\P$ be a forest. There is a series-parallel ideal $I_\P$ such that $\P$ is its support poset.
\end{Proposition}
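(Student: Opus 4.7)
The plan is to construct $I_\P$ by strong induction on $|\P|$, using only the two series-parallel constructors (disjoint sum and disjoint intersection) and matching the recursive structure of the forest. Throughout, $\P$ is drawn on the variable set $\{x_i : i \in \P\}$, and in recursive calls the subforests inherit the obvious restriction of that variable set.

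For the base case $|\P|=1$ with $\P = \{i\}$, set $I_\P := \langle x_i\rangle$, which is basic series-parallel and has $\{i\}$ as its support poset. For the inductive step I would distinguish according to the connectivity of $\P$. If $\P = \P_1 \sqcup\cdots\sqcup \P_k$ is disconnected with $k\geq 2$, apply induction on each tree $\P_j$ (on pairwise disjoint variable sets) to get series-parallel ideals $I_{\P_j}$ with $\suppPos(I_{\P_j}) = \P_j$, and let $I_\P := I_{\P_1}+\cdots+I_{\P_k}$. This is series-parallel by definition; since the variable sets are disjoint, each $C_i$ is computed inside exactly one summand and the support poset of the sum is the disjoint union $\P$. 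If instead $\P$ is a single tree, identify its unique minimum $r$ (the root), set $\P' := \P\setminus\{r\}$, apply induction to obtain a series-parallel ideal $I_{\P'}$ on variables $\{x_i : i\in\P'\}$ with $\suppPos(I_{\P'})=\P'$, and define $I_\P := \langle x_r\rangle \cap I_{\P'}$. This is series-parallel as the intersection of two series-parallel ideals on disjoint variable sets.

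The main verification is that the support poset of $I_\P$ in the connected case is really $\P$. Every minimal generator of $I_\P$ has the form $x_r\cdot g$ with $g \in G(I_{\P'})$, which immediately gives $C_i^{I_\P} = \{r\}\cup C_i^{I_{\P'}}$ for all $i\in\P'$ and $C_r^{I_\P} = \{r\}\cup \bigcap_{g\in G(I_{\P'})}\supp(g)$. From here, inclusions among the $C_i^{I_\P}$ for $i\in\P'$ transfer bijectively to inclusions of the $C_i^{I_{\P'}}$, that is to the poset $\P'$, while $C_r^{I_\P}\subseteq C_i^{I_\P}$ for every $i\in\P'$ since every index appearing in \emph{every} generator of $I_{\P'}$ certainly appears in every generator containing $x_i$. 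Thus $r$ is installed as the unique minimum below the forest $\P'$, reproducing $\P$.

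The delicate point I expect will be handling the situation where $r$ has a single child in $\P$, in which case $\bigcap_{g}\supp(g)$ is non-trivial and $C_r^{I_\P}$ coincides with the $C$-value along the initial trunk of $\P'$. This is exactly the trunk phenomenon identified in Theorem \ref{th:spIdeal} case (ii): variables with identical $C$-value get collapsed to a single (multi-labelled) element of the support poset, so this coincidence is in fact compatible with the interpretation of $\P$ as a forest. Once this bookkeeping is settled, the induction runs cleanly because the two cases in the inductive step match precisely the two series-parallel constructors.
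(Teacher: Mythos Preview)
Your construction is essentially the same as the paper's: the paper defines $I_j = \langle x_j\rangle \cap \sum_{a} I_{j_a}$ for each node $j$ with children $j_1,\ldots,j_k$ (and $I_i = \langle x_i\rangle$ at leaves), which when unrolled is exactly your recursion $I_\P = \langle x_r\rangle \cap I_{\P\setminus\{r\}}$ together with your disjoint-sum step for disconnected forests, and the resulting ideal is the leaf ideal in both cases. Your verification is in fact more careful than the paper's, which simply asserts at each stage that the support poset of $I_j$ is the upper set $\P_{\geq j}$ without addressing the trunk coincidence you correctly flag.
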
 
\begin{proof}
Let $\P$ be a tree whose root is $r$. For every leaf $i$ of $\P$ let $I_i=\langle x_i\rangle$. For every inner node $j$ whose children are $j_1,\dots,j_k$ let $I_j=\langle x_j\rangle\cap\sum_{i=1}^{i=k}I_{j_k}$. At each stage we have that the support poset of $I_j$ is the upper set of $j$, $\P_{\geq j}$, hence we have that $I_\P=I_r$.
\end{proof}

Observe that the ideal we construct in Proposition \ref{prop:spIdeal} is in fact the {\em leaf} ideal of Proposition \ref{prop:leafIdeal}. The proof is an easy inspection of each of the generators. While we always have that $\P_{I(\P)}=\P$ it is not always the case that $I=I(\P_I)$ as the following example shows.

\begin{Example}\label{ex:differentSeriesParallel}
Consider the system $S_1$ in Figure \ref{fig:spsystem1}. It is a series-parallel system whose construction procedure following Theorem \ref{th:spIdeal} yields the cut ideal 
\[
I_{S_1}=\left( \langle x_1\rangle \cap \left(\langle x_2\rangle \cap \langle x_3\rangle+\langle x_4\rangle\right)\right)\cap \left( \langle x_5\rangle \cap \left(\langle x_6\rangle \cap \langle x_7\rangle+\langle x_8\rangle\right)\right)\subseteq\kb[x_1,\dots,x_8].
\]

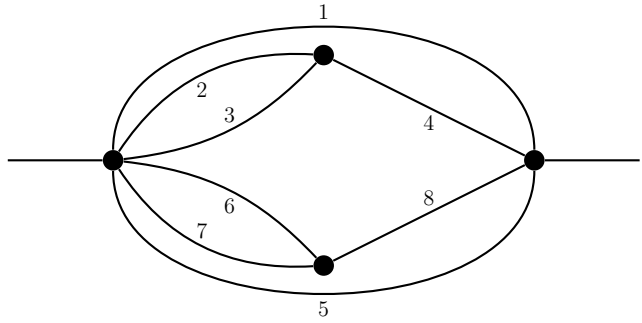
\begin{figure}
\centering
\begin{tikzpicture}[scale=0.7,transform shape]
\draw
node at (0,0) [fill,circle](n1){}
node at (4,2) [fill,circle](n2){}
node at (4,-2) [fill,circle](n3){}
node at (8,0) [fill,circle](n4){};

%\node[black,fill=black,name=n2] (2,1) circle (.5ex){};
%\node[black,fill=black,name=n3] (2,-1) circle (.5ex){};
%\node[black,fill=black,name=n4] (4,0) circle (.5ex){};

\draw (-2,0) edge[thick](n1);
\draw (n1) edge[bend right=20,thick] node[midway,above] {$3$} (n2) ;
\draw (n1) edge[bend left=30,thick]  node[midway,below] {$2$}(n2);
\draw (n1) edge[bend right=30,thick] node[midway,above] {$7$} (n3);
\draw (n1) edge[bend left=20,thick]node[midway,below] {$6$} (n3);
\draw (n2) edge[thick] node[midway,below] {$4$}(n4);
\draw (n3) edge[thick] node[midway,above] {$8$}(n4);
\draw (n1) edge[bend left=90,thick] node[midway,above] {$1$} (n4);
\draw (n1) edge[bend right=90,thick] node[midway,below] {$5$}(n4);
\draw (n4) edge[thick](10,0);
\end{tikzpicture}
\caption{$S_1$, a series-parallel system.}
\label{fig:spsystem1}
\end{figure}

We have that $I_{S_1}=\langle x_1x_2x_3x_5x_6x_7,x_1x_2x_3x_5x_8,x_1x_4x_5x_6x_7,x_1x_4x_5x_8\rangle$ and its support poset $\P_{I_{S_1}}$ is given by the sets
\[
C_1=\{1,5\},\,C_2=\{1,2,3,5\},\,C_3=\{1,2,3,5\},\,C_4=\{1,4,5\},
\]
\[
C_5=\{1,5\},\,C_6=\{1,5,6,7\},\,C_7=\{1,5,6,7\},\,C_8=\{1,5,8\}
\]
whose Hasse diagram (for $x_1<x_5,x_2<x_3$ and $x_6<x_7$) is depicted in Figure \ref{fig:posetSP}

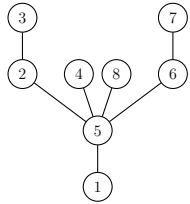
\begin{figure}[h]
\centering

      \begin{tikzpicture}[scale=.5, transform shape, vertices/.style={text width= 1.5em,align=center,draw=black, fill=white, circle, inner sep=1pt}]
             \node [vertices] (0) at (3,0){1};
             \node [vertices] (1) at (3,1.5){5};
             \node [vertices] (2) at (1,3){2};
             \node [vertices] (3) at (2.5,3){4};
             \node [vertices] (4) at (3.5,3){8};
             \node [vertices] (5) at (5,3){6};
             \node [vertices] (6) at (1,4.5){3};
             \node [vertices] (7) at (5,4.5){7};

              \foreach \to/\from in {0/1, 1/2, 1/3, 1/4, 1/5, 2/6, 5/7}
     \draw [-] (\to)--(\from);		
     \end{tikzpicture}     
     \caption{Support poset of $I_{S_1}$}
     \label{fig:posetSP}
\end{figure}

Observe that following the procedure in Proposition \ref{prop:spIdeal} on $\P_{I_{S_1}}$ we obtain the series-parallel ideal  $I(\P_{I_{S_1}})=\langle x_1x_2x_3x_5,x_1x_4x_5,x_1x_5x_8,x_1x_5x_6x_7\rangle$ which is also the {\em leaf ideal} of $\P_{I_{S_1}}$. This is the cut ideal of the series-parallel system $S_2$ in Figure \ref{fig:spsystem2}. Observe that $S_1$ and $S_2$ are two different series-parallel systems, yet their respective cut ideals have the same support poset.

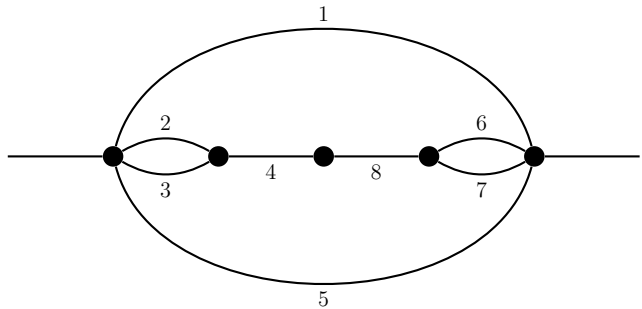
\begin{figure}
\centering
\begin{tikzpicture}[scale=0.7,transform shape]
\draw
node at (0,0) [fill,circle](n1){}
node at (2,0) [fill,circle](n2){}
node at (4,0) [fill,circle](n3){}
node at (6,0) [fill,circle](n4){}
node at (8,0) [fill,circle](n5){};

%\node[black,fill=black,name=n2] (2,1) circle (.5ex){};
%\node[black,fill=black,name=n3] (2,-1) circle (.5ex){};
%\node[black,fill=black,name=n4] (4,0) circle (.5ex){};

\draw (-2,0) edge[thick](n1);
\draw (n1) edge[bend right=30,thick] node[midway,below] {$3$} (n2) ;
\draw (n1) edge[bend left=30,thick]  node[midway,above] {$2$}(n2);
\draw (n1) edge[bend right=75,thick] node[midway,below] {$5$} (n5);
\draw (n1) edge[bend left=75,thick]node[midway,above] {$1$} (n5);
\draw (n2) edge[thick] node[midway,below] {$4$}(n3);
\draw (n3) edge[thick] node[midway,below] {$8$}(n4);
\draw (n4) edge[bend left=30,thick] node[midway,above] {$6$} (n5);
\draw (n4) edge[bend right=30,thick] node[midway,below] {$7$}(n5);
\draw (n5) edge[thick](10,0);
\end{tikzpicture}
\caption{$S_2$, a series-parallel system.}
\label{fig:spsystem2}
\end{figure}

\end{Example}

\section{Open questions}
The fact that not every poset is the support poset of a monomial ideal poses a question that we have partially answered in this paper, namely
\begin{Question}
How can we characterize those posets that are realizable as the support poset of a monomial ideal?
\end{Question}
We have seen that forests and collections of disjoint diamonds are in the category of realizable posets. A different question is about those posets that cannot be realized as support posets:
\begin{Question}
How can we characterize those posets that are not realizable as the support poset of a monomial ideal?
\end{Question}
Finally, a big open question arises from the fact that a given poset can be seen a the support poset of several ideals which have different properties, cf. Remark \ref{rm:differentIdeals}. Even within the class of forests and series-parallel ideals we have seen that a given tree can be the support poset of different series-parallel ideals, cf. Example \ref{ex:differentSeriesParallel}, these may have different number of generators, hence different Betti numbers, projective dimension, regularity, etc. A wide open question is then to find relations between ideals having the same support poset and properties of the ideal that can be read off the support poset.
\begin{Question}
What properties are shared by ideals that have the same support poset?
\end{Question}
Attempts to answer these questions will prove the usefulness of the support poset as a tool to study monomial ideals and their structure.

\section*{Acknowledgements}
The authors are partially funded by grant MTM2017-88804-P of Ministerio de Econom\'ia, Industria y Competitividad (Spain). The first cited author is partially funded by University of La Rioja predoctoral (education) researcher grant on 2018.

{}
\end{document}